\newcommand*{\longhookrightarrow}{\ensuremath{\lhook\joinrel\relbar\joinrel\rightarrow}}
\newcommand{\R}{\mathbb R}
\newcommand{\Q}{\mathbb Q}
\newcommand{\fX}{\mathfrak X}
\newcommand{\IX}{{I_\fX}}
\newcommand{\fXbs}{\fX_{\overline s}}
\newcommand{\fXbe}{\fX_{\overline \eta}}
\newcommand{\LX}{\Lambda_{\fX}}
\newcommand{\LXe}{\Lambda_{\fX_\eta}}
\newcommand{\LXs}{\Lambda_{\fXbs}}
\newcommand{\QXe}{\Q_{\ell,\fX_\eta}}
\newcommand{\fU}{\mathfrak U}
\newcommand{\QUe}{\Q_{\ell,\fU_\eta}}
\newcommand{\fB}{\mathfrak B}
\newcommand{\tfB}{\widetilde{\fB}}
\newcommand{\fS}{\mathfrak S}
\newcommand{\Z}{\mathbb Z}
\newcommand{\et}{_\mathrm{\acute{e}t}}
\newcommand{\wD}{\widetilde D}
\newcommand{\an}{^\mathrm{an}}
\newcommand{\Gm}{\mathbb G_\mathrm{m}}
\newcommand{\Gmnan}{(\Gm^n)\an}
\newcommand{\trop}{^\mathrm{t}}
\DeclareMathOperator{\dist}{dist}
\DeclareMathOperator{\val}{val}
\DeclareMathOperator{\Spec}{Spec}
\DeclareMathOperator{\Spf}{Spf}
\DeclareMathOperator{\Image}{Im}
\DeclareMathOperator{\Ker}{Ker}
\DeclareMathOperator{\Coker}{Coker}
\DeclareMathOperator{\Bl}{Bl}
\DeclareMathOperator{\CH}{CH}
\theoremstyle{plain}
\newtheorem{thm}{Theorem}[section]
\newtheorem{lem}[thm]{Lemma}
\newtheorem{prop}[thm]{Proposition}
\newtheorem{cor}[thm]{Corollary}
\theoremstyle{definition}
\newtheorem{defin}[thm]{Definition}
\theoremstyle{remark}
\newtheorem{eg}[thm]{Example}
\newtheorem{rem}[thm]{Remark}
\numberwithin{equation}{section}
\title{Balancing conditions in global tropical geometry}
\author{Tony Yue YU}
\date{8 Apr 2013 (Revised on 22 Jan 2015)}
\address{Tony Yue YU, Institut de Math\'ematiques de Jussieu, CNRS-UMR 7586, Case 7012, Universit\'e Paris Diderot - Paris 7, B\^atiment Sophie Germain 75205 Paris Cedex 13 France}
\email{yuyuetony@gmail.com}
\subjclass[2010]{14T05, 14G22}
\begin{document}

\begin{abstract}
We study tropical geometry in the global setting using Berkovich's deformation retraction.
We state and prove the generalized balancing conditions in this setting.
Starting with a strictly semi-stable formal scheme, we calculate certain sheaves of vanishing cycles using analytic étale cohomology, then we interpret the tropical weights via these cycles.
We obtain the balancing condition for tropical curves on the skeleton associated to the formal scheme in terms of the intersection theory on the special fiber.
Our approach works over any complete discrete valuation field.
\end{abstract}

\maketitle

\section{Introduction and statement of result}\label{sec:intro}

Tropicalization is a procedure which relates algebraic geometry with tropical geometry.
Usually, tropicalization is carried out in the setting of toric varieties \cite{Einsiedler_Non-archimedean_2006,Baker_Nonarchimedean_2011,Gubler_Guide_2013}.
Let us give a quick review in the case of curves.

By definition, a toric variety contains an open dense torus $\Gm^n$.
Assume that our torus $\Gm^n$ is defined over a complete discrete valuation field $k$.
Consider the coordinate-wise valuation map
\begin{equation}\label{eq:tau_0}
\Gm^n(k)\simeq (k^\times)^n \rightarrow \R^n, \qquad
(x_1,\dots,x_n) \mapsto (\val(x_1),\dots,\val(x_n)).
\end{equation}
This map can be extended to the Berkovich analytification $\Gmnan$ and we obtain a continuous surjective map $\tau_0\colon \Gmnan\rightarrow\R^n$.

Now let $C$ be an analytic curve embedded in $\Gmnan$.
Traditionally, the tropicalization of $C$ is by definition the image $\tau_0(C)\subset\R^n$, which we denote by $C\trop$.
The tropical curve $C\trop$ has the structure of a metrized graph satisfying the \emph{balancing condition}.
One idea of tropical geometry is to study curves in algebraic varieties in terms of such combinatorial gadgets.

Let us recall the classical balancing condition.
To each edge $e$ of $C\trop$ with a chosen orientation, one can associate an element $\widetilde w_e\in \Z^n$, parallel to the direction of $e$ inside $\R^n$, called the \emph{tropical weight} of $e$.
Then the balancing condition states that for any vertex $v\in C\trop$, we have $\sum_{e\ni v} \widetilde w_e=0$, where the sum is taken over all edges that contain $v$ as an endpoint, and the orientation of each edge is chosen to be the one that points away from $v$.
We refer to \cite{Mikhalkin_Enumerative_2005,Nishinou_Toric_2006,Speyer_Uniformizing_2007,Baker_Nonarchimedean_2011} for the proofs.

In order to go beyond the toric case above, we propose to replace the map $\tau_0\colon\Gmnan\to\R^n$ by the deformation retraction of a non-archimedean analytic space onto its skeleton constructed by V.\ Berkovich \cite{Berkovich_Smooth_1999}.
Let $\fX$ be a strictly semi-stable formal scheme over the ring of integers $k^\circ$ (see Definition \ref{def:strictly semi-stable formal scheme}).
Its generic fiber $\fX_\eta$ is a Berkovich space over $k$ and its special fiber $\fX_s$ is a scheme over the residue field $\widetilde{k}$.
Let $S_\fX$ denote the dual intersection complex of $\fX_s$.
Following \cite{Berkovich_Smooth_1999}, one can construct an embedding $S_\fX\subset\fX_\eta$, and a continuous proper surjective retraction map $\tau\colon \fX_\eta\rightarrow S_\fX$.
So $S_\fX$ is called a \emph{skeleton} of $\fX_\eta$.
We consider the map $\tau\colon\fX_\eta\to S_\fX$ to be the globalization of the map $\tau_0\colon\Gmnan\to\R^n$ in \eqref{eq:tau_0}.

Now in parallel, let $C$ be a compact quasi-smooth\footnote{The quasi-smoothness assumption on the curve $C$ is not restrictive because we are considering morphisms from $C$ to $\fX_\eta$ rather than embedded curves, and one can always make desingularizations.} $k$-analytic curve, and let $f \colon C\rightarrow\fX_\eta$ be a $k$-analytic morphism.
We call the image $(\tau\circ f)(C)\subset S_\fX$ the associated tropical curve, and denote it again by $C\trop$.
By working locally, one can show as in the toric case that $C\trop$ is a graph piecewise linearly embedded in $S_\fX$.
However, the balancing condition for the tropical curve $C\trop$ is no longer clear in the global setting, because the vertices of $C\trop$ may sit on the corners of $S_\fX$.
So we ask the following question.

\bigskip
\paragraph{\textbf{Question}} Let $v$ be a vertex of the tropical curve $C\trop$. What are the constraints on the shape of $C\trop$ near the vertex $v$?

\bigskip

In this paper, we give a necessary condition in terms of the intersection theory on the special fiber.

Let $\{D_i\}_{i\in I_\fX}$ denote the set of irreducible components of the special fiber $\fX_s$.
We have a natural embedding $S_\fX\subset\R^{I_\fX}$.
Assume that the vertex $v$ sits in the relative interior of the face of $S_\fX$ corresponding to a subset $I_v\subset I_\fX$.
Let $D_{I_v}$ denote the corresponding closed stratum of $\fX_s$, and let $\overline{D}_{I_v}$ denote the base change to the algebraic closure of $\widetilde k$.

As in the classical case, to each edge $e$ of $C\trop$ containing $v$ as an endpoint, we can associate a weight $\widetilde w_e\in \Z^\IX$ (see Section \ref{sec:weights}, compare \cite[§6]{Baker_Nonarchimedean_2011}).
We denote the sum of weights around $v$ by
\begin{equation}\label{eq:sum of weights}
\sigma_v \coloneqq \sum_{e\ni v}\widetilde w_e\in \Z^\IX .
\end{equation}

Let $L_i$ be the pullback of the line bundle $\mathcal O(D_i)$ to $\overline D_{I_v}$, for every $i\in\IX$.
Let $\alpha$ be the map
\begin{align*}
\alpha \colon \CH_1\left(\overline D_{I_v}\right) &\longrightarrow \Z^\IX \\
L &\longmapsto \big(\, L\cdot L_i ,\ i\in \IX\big) ,
\end{align*}
that takes a one-dimensional cycle $L$ in $\overline D_{I_v}$ to its intersection numbers with the divisors $L_i$ for every $i\in\IX$.

Our generalized balancing condition is stated in the following theorem.
The rest of the paper provides a proof of the theorem.

\begin{thm}\label{thm:balancing condition}
Assume that the closed stratum $D_{I_v}$ is projective, and that the vertex $v$ does not lie in the image of the boundary $(\tau\circ f)(\partial C)$.
Then the sum of weights $\sigma_v$ lies in the image of the map
\[\alpha_\Q\coloneqq\alpha\otimes\Q\colon \CH_1\left(D_{I_v}\right)_\Q \longrightarrow \Q^\IX .\]
\end{thm}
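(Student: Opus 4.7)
The plan is to realise $\sigma_v$ as $\alpha_\Q(Z)$ for a one-cycle class $Z\in\CH_1(\overline D_{I_v})_\Q$ built directly from $f$. Since $v\notin(\tau\circ f)(\partial C)$, I would first shrink $\fX$ to a formal open $\fU$ whose skeleton is the open star of $v$, chosen small enough that $(\tau\circ f)^{-1}(\overline U)$ is a compact subspace of $C\setminus\partial C$; this reduces the theorem to a purely local statement around $v$. Strict semi-stability provides étale coordinates $T_i$ ($i\in I_v$) on $\fU$ satisfying $\prod_{i\in I_v}T_i=\pi^{a_v}$, giving an explicit model for the tube $\tau^{-1}(v)\subset\fU_\eta$ and identifying its canonical reduction with a dense open subset of $\overline D_{I_v}$.

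The cycle $Z$ itself is then constructed by taking the compact analytic subspace $f^{-1}(\tau^{-1}(v))\subset C$, choosing a formal model for it (after a possible finite extension of $k$), and pushing the fundamental class of its one-dimensional special fibre along the reduction of $f$ into $\overline D_{I_v}$. The passage to $\Q$ absorbs the ramification denominator introduced by that base change. The main step is then to verify $\alpha_\Q(Z)=\sigma_v$ component by component: for each $i\in\IX$, I would compute $Z\cdot L_i$ locally around the preimages of $v$ in $C$ and identify it with $\sum_{e\ni v}(\widetilde w_e)_i$, using the vanishing cycle calculations announced in the abstract to interpret $(\widetilde w_e)_i$ either as an order of vanishing of $T_i$ along the edge $e$ (when $i\in I_v$) or as an incidence multiplicity of $f(C)$ with the effective divisor $D_i\cap D_{I_v}$ inside $\overline D_{I_v}$ (when $i\notin I_v$ and the two divisors meet). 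Summation over the edges incident to $v$ then yields $\sigma_v$.

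The principal obstacle will be this last identification. The weight $\widetilde w_e$ is defined intrinsically from the behaviour of $\tau\circ f$ along the edge $e$, so one needs a systematic dictionary — presumably supplied by the sheaves of vanishing cycles computed earlier in the paper — translating this analytic and cohomological data into honest intersection numbers on the projective variety $\overline D_{I_v}$; the projectivity hypothesis enters precisely to keep those intersection numbers well defined. Once the dictionary is in place the theorem is immediate, because $\sigma_v$ is then exhibited as the image under $\alpha_\Q$ of a class already living in $\CH_1(\overline D_{I_v})_\Q$.
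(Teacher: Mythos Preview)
Your approach is genuinely different from the paper's, and the difference is worth spelling out.

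You propose to exhibit an explicit class $Z\in\CH_1(\overline D_{I_v})_\Q$ by specializing a formal model of (a neighbourhood of) $f^{-1}(\tau^{-1}(v))$ and then to check $\alpha_\Q(Z)=\sigma_v$ coordinate by coordinate via intersection theory on the special fibre. The paper never constructs such a $Z$. Instead it proceeds indirectly in two stages. First (Theorem~\ref{thm:cohomological balancing}) it proves an \'etale-cohomological version: using Proposition~\ref{prop:weights-cycles}, the sum $\sigma_v$ is packaged as a functional $\sigma_v^*$ on $R^1\Gamma(j^*R\Phi\QXe)\simeq\Coker(\Q_\ell\xrightarrow{\Delta}\Q_\ell^J)(-1)$, and Lemma~\ref{lem:boundary} (a Gysin argument on the curve $C^\circ$) forces $\sigma_v^*\circ\beta^*=0$; exactness of the long exact sequence \eqref{eq:long exact sequence Q_l coefficients} then gives $\sigma_v\in\Image(\alpha_\ell)$. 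Second (Section~\ref{sec:From cohomological classes to algebraic cycles}), Matsusaka's theorem that numerical and homological equivalence agree for divisors yields $\Image(\alpha_\Q)\otimes\Q_\ell=\Image(\alpha_\ell)$, and since $\sigma_v$ is rational one concludes $\sigma_v\in\Image(\alpha_\Q)$.

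What each route buys: your argument, if completed, is more constructive and would produce an actual cycle witnessing the balancing; it also makes the projectivity hypothesis feel more natural (it guarantees intersection numbers). The paper's route trades explicitness for robustness --- it never needs to build a formal model of the curve near $v$ or to match intersection multiplicities with tropical weights one index at a time; all of that is absorbed into the single identity $\sigma_v^*\circ\beta^*=0$. Note in particular that the vanishing-cycle calculations you plan to invoke as a ``dictionary'' are used in the paper for a different purpose: not to translate weights into intersection numbers on $\overline D_{I_v}$, but to show (Proposition~\ref{prop:weights-cycles}) that the weights factor through the vanishing part $R^1\Gamma(j^*R\Phi)$, which is exactly what makes the long-exact-sequence argument run. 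Your ``principal obstacle'' is therefore real, and the paper's machinery does not resolve it in the form you anticipate; you would need a separate projection-formula or specialization argument to finish your version.
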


\bigskip


\begin{eg}\label{eg@top_dim}
Assume that $\fX$ is $n$-dimensional and that the vertex $v$ sits in the interior of an $n$-dimensional face of $S_\fX$.
Then $D_{I_v}$ is a point and the map $\alpha$ is zero.
Our balancing condition in this case simply states that the sum of weights $\sigma_v$ must be zero.
So we recover the classical balancing condition in our generalized setting.
\end{eg}


\begin{eg}
Let us work out a concrete example for a degeneration of K3 surfaces.
Let $k=\mathbb C(\!(t)\!)$ be the field of formal Laurent series.
Let $\fX^0\subset\mathbf P^3_{\mathbb C[\![t]\!]}$ be the formal scheme given by the equation
\[x_0 x_1 x_2 x_3 + t P_4(x_0,x_1,x_2,x_3)=0,\]
where $P_4$ is a generic homogeneous polynomial of degree four. We think of $\fX^0$ as a formal family of complex K3 surfaces.
The special fiber $\fX^0_s$ consists of the four coordinate hyperplanes in $\mathbf P^3_{\mathbb C}$, which we denote by $D_0, D_1, D_2, D_3$ respectively.
The formal scheme $\fX^0$ is not strictly semi-stable.
We make a small resolution (cf.\ \cite{Atiyah_Analytic_1958}) at each of the 24 conical singularities $p_\alpha$, given by the equations
\[P_4(x_0,x_1,x_2,x_3)=0,\ x_i=x_j=0,\quad\text{for } 0\leq i<j\leq 3 .\]
More precisely, we blow up the divisors $D_0, D_1, D_2, D_3$ subsequently, and denote by $\fX$ the formal scheme after the blow-ups.
Other choices of small resolutions are also possible.
We make this particular choice for the simplicity of exposition.

Now the formal scheme $\fX$ is strictly semi-stable.
Its special fiber $\fX_s$ has four irreducible components, which are strict transforms of the divisors $D_0, D_1, D_2, D_3$. We denote them by $\wD_0, \wD_1, \wD_2, \wD_3$ respectively. We have
\begin{align*}
\wD_0 & \simeq\mathbf P_{\mathbb C}^2 ,\\
\wD_1 & \simeq \Bl_\text{\{4 points\}}\mathbf P_{\mathbb C}^2 ,\\
\wD_2 & \simeq \Bl_\text{\{8 points\}}\mathbf P_{\mathbb C}^2 ,\\
\wD_3 & \simeq \Bl_\text{\{12 points\}}\mathbf P_{\mathbb C}^2 ,
\end{align*}
where the symbol $\Bl$ means blow-up.
The dual intersection complex $S_\fX\subset\R^4$ is a hollow tetrahedron, which is homeomorphic to the sphere $S^2$.
Let $v$ be a point in $S_\fX$. Let us describe explicitly our balancing condition at $v$.
According to Theorem \ref{thm:balancing condition}, it suffices to calculate the map $\alpha \colon \CH_1\big(D_{I_v}\big)\longrightarrow \Z^4$.
We distinguish three cases.

First, the point $v$ sits in the relative interior of a $2$-dimensional face of $S_\fX$.
Then we are in the situation of Example \ref{eg@top_dim}.
So the map $\alpha$ is zero in this case.

Second, the point $v$ sits on a vertex of $S_\fX$.
Suppose for example $v$ corresponds to the divisor $\wD_0$.
We have $\CH_1(\wD_0)\simeq \CH_1(\mathbf P_{\mathbb C}^2)\simeq\Z$.
The map $\alpha\colon \CH_1(\wD_0)\rightarrow\Z^4$ sends $1$ to $(-3,1,1,1)$.
We omit the cases where $v$ corresponds to other divisors.

Third, the point $v$ sits in the relative interior of a $1$-dimensional face of $S_\fX$.
Suppose for example the corresponding closed stratum $D_{I_v}$ is the intersection $\wD_0\cap \wD_1$, which we denote by $\wD_{01}$.
It is isomorphic to the projective line $\mathbf P^1_{\mathbb C}$, so $\CH_1(\wD_{01})\simeq\Z$.
Let $E_1,E_2,E_3,E_4$ denote the four exceptional curves in $\wD_1$.
Let $\pi_1\colon\wD_1\to\mathbf P_{\mathbb C}^2$ denote the blow-up.
Then $(\pi_1^* \mathcal O(1), [ E_1], [ E_2], [ E_3], [ E_4])$ form a basis of $\CH_1(\wD_1)$.
From the relations
\begin{align*}
\big(\big[\wD_0\big] + \big[\wD_1\big] + \big[\wD_2\big] + \big[\wD_3\big]\big)_{\big|\big[\wD_0\big]} &=0\in \CH_1\big(\wD_0\big),\\
\big(\big[\wD_0\big] + \big[\wD_1\big] + \big[\wD_2\big] + \big[\wD_3\big]\big)_{\big|\big[\wD_1\big]}  &=0\in \CH_1\big(\wD_1\big) ,
\end{align*}
we have
\begin{align*}
\big[\wD_0\big]_{\big|\big[\wD_0\big]} &= -3 \in \CH_1\big(\wD_0\big),\\
\big[\wD_1\big]_{\big|\big[\wD_1\big]} &= (-3,1,1,1,1) \in \CH_1\big(\wD_1\big) .
\end{align*}
Therefore,
\begin{align*}
\big[\wD_0\big]_{\big|\big[\wD_{01}\big]}&=-3\in \CH_1\big(\wD_{01}\big),\\
\big[\wD_1\big]_{\big|\big[\wD_{01}\big]}&=-3+1+1+1+1 = 1\in \CH_1\big(\wD_{01}\big) .
\end{align*}
It is obvious that
\[\big[\wD_2\big]_{\big|\big[\wD_{01}\big]}=\big[\wD_3\big]_{\big|\big[\wD_{01}\big]}=1\in \CH_1\big(\wD_{01}\big) .\]
So we conclude that the map $\alpha\colon \CH_1\big(\wD_{01}\big)\rightarrow\Z^4$ sends $1$ to $(-3,1,1,1)$.
\end{eg}

\bigskip
\paragraph{\textbf{Plan}} Basic definitions are given in Section \ref{sec:skeleton}.
In Section \ref{sec:vanishing cycles}, we study the geometry of strictly semi-stable formal schemes in terms of vanishing cycles. In Section \ref{sec:weights}, we define tropical weights.
We prove that they are homological in nature.
Indeed, they are only related to the ``vanishing part'' of the first degree cohomology of the generic fiber (Proposition \ref{prop:weights-cycles}).
In Section \ref{sec:tube}, we establish an important technical step which allows us to localize our calculation of vanishing cycles to a smaller domain inside the skeleton.
In Section \ref{sec:cohomological balancing}, we prove a weaker form of our balancing conditions in terms of étale cohomology.
The key ingredient is the long exact sequence relating nearby cycles with vanishing cycles.
In Section \ref{sec:From cohomological classes to algebraic cycles}, we explain how to use standard arguments in algebraic geometry to obtain the stronger balancing condition (Theorem \ref{thm:balancing condition}) which is stated in terms of algebraic cycles.

\medskip
\paragraph{\bf Acknowledgement} I am very grateful to Maxim Kontsevich for inspiring discussions, from which this article originates. Discussions with Antoine Ducros, Pierrick Bousseau, Jean-François Dat, Ilia Itenberg, Sean Keel, Bernhard Keller, Bruno Klingler and Grigory Mikhalkin are equally very essential and useful.
I would also like to thank the referees for valuable comments.

\section{Strictly semi-stable formal schemes and skeleta}\label{sec:skeleton}

In this article, $k$ always denotes a complete discrete valuation field\footnote{We assume the valuation to be discrete because the theory of special formal schemes (cf.\ \cite{Berkovich_Vanishing_II_1996}) in the non-discrete valued case is not studied in the literature. It is not clear whether one can drop this assumption.}. Let $k^\circ$ be the ring of integers of $k$, $k^{\circ\circ}$ the maximal ideal of $k^\circ$, and $\widetilde k$ the residue field. The symbol $\ell$ always denotes a prime number invertible in the residue field $\widetilde k$.

For $n\geq 1$, $0\leq d\leq n$ and $a\in k^{\circ\circ}\setminus 0$, put
\begin{equation}\label{eq:standard formal scheme}
\fS(n,d,a) = \Spf \left(k^\circ\{T_0,\dots,T_{d}, S^\pm_{d+1},\dots,S^\pm_n\}/(T_0\cdots T_{d}-a)\right) .
\end{equation}

\begin{defin}
A formal scheme $\fX$ over $k^\circ$ is said to be \emph{finitely presented} if it is a finite union of open affine subschemes of the form \[\Spf\left(k^\circ\{T_0,\dots,T_n\}/(f_1,\dots,f_m)\right) .\]
\end{defin}

\begin{defin}\label{def:strictly semi-stable formal scheme}
Let $\fX$ be a formal scheme finitely presented over $k^\circ$. $\fX$ is said to be \emph{strictly semi-stable} if every point $x$ of $\fX$ has an open affine neighbourhood $\fU$ such that the structural morphism $\fU\rightarrow\Spf k^\circ$ factorizes through an étale morphism $\phi\colon \fU\rightarrow\mathfrak S(n,d,a)$ for some $0\leq d\leq n$ and $a\in k^{\circ\circ}\setminus 0$.
\end{defin}

Recall that for a formal scheme $\fX$ finitely presented over $k^\circ$, its special fiber $\fX_s$ is a scheme of finite type over $\widetilde k$, and its generic fiber $\fX_\eta$ is a compact strictly $k$-analytic space (cf.\ \cite{Berkovich_Spectral_1990,Berkovich_Etale_1993,Berkovich_Vanishing_1994}).
When $\fX$ is polystable in the sense of \cite{Berkovich_Smooth_1999}, one can construct a polysimplicial set $\mathbf C(\fX_s)$.
Its topological realization is denoted by $S_\fX$.
In \cite{Berkovich_Smooth_1999}, Berkovich constructed an embedding $S_\fX\subset\fX_\eta$ and a strong deformation retraction from $\fX_\eta$ to $S_\fX$.
So $S_\fX$ is called the \emph{skeleton} of $\fX_\eta$ with respect to $\fX$.
In our simplified situation, i.e. when $\fX$ is strictly semi-stable, the skeleton $S_\fX$ has a simple description as the dual intersection complex of the special fiber $\fX_s$.

Let $\Set{D_i | i\in\IX=\{0,\dots,N\}}$ be the set of irreducible components of the special fiber $\fX_s$. For any non-empty subset $I\subset\IX$, let $D_I=\bigcap_{i\in I} D_i$ and
\begin{equation}J_I=\Set{j | D_{I\cup \{j\}}\neq \emptyset} .\label{eq:J}\end{equation}
We further assume that the strata $D_I$ are all irreducible. The general constructions in \cite{Berkovich_Smooth_1999} imply the following two lemmas.

\begin{lem}
The skeleton $S_\fX$ is the finite simplicial sub-complex of the simplex $\Delta^{\IX}$ such that for any $I\subset\IX$, $\Delta^I$ is a face of $S_\fX$ if and only if $D_I\neq\emptyset$.
\end{lem}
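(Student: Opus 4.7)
The proof is essentially an unpacking of Berkovich's construction \cite{Berkovich_Smooth_1999} in the strictly semi-stable case, so the plan is to reduce to the local model and then glue.

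First, I would recall the local picture. By Definition \ref{def:strictly semi-stable formal scheme}, every point of $\fX$ has an étale neighbourhood $\fU \to \fS(n,d,a)$. For the standard model $\fS(n,d,a) = \Spf(k^\circ\{T_0,\dots,T_d, S^{\pm}_{d+1},\dots,S^{\pm}_n\}/(T_0\cdots T_d - a))$, the irreducible components of the special fiber are cut out by $T_i = 0$ for $i=0,\dots,d$, and a non-empty subset $I \subset \{0,\dots,d\}$ yields a non-empty stratum (a torus times a coordinate subspace). Berkovich's construction realises the skeleton as the standard $d$-simplex
\[
\Delta_a^d = \Set{(t_0,\dots,t_d) \in \R_{\geq 0}^{d+1} | t_0 + \cdots + t_d = \val(a)},
\]
embedded in $\fS(n,d,a)_\eta$ by sending $(t_0,\dots,t_d)$ to the Shilov point of the polyannulus with $|T_i| = e^{-t_i}$ and $|S_j| = 1$. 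The faces of $\Delta_a^d$ are in bijection with non-empty subsets $I \subset \{0,\dots,d\}$, namely the face $t_i = 0$ for $i \notin I$, and this bijection matches faces with non-empty intersections $D_I$ of local components.

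Second, I would glue. For $\fX$ strictly semi-stable, the étale local models fit together to give a polysimplicial structure on $\fX_s$, and Berkovich's functorial construction of the skeleton shows that $S_\fX$ is obtained by gluing the local simplices $\Delta_a^d$ along face maps induced by inclusions of strata. Because we have assumed that every stratum $D_I$ is irreducible, two local simplices corresponding to the same subset $I \subset \IX$ are identified with the same global face; concretely, the skeleton naturally embeds into $\R^{\IX}$ by sending a point in the face corresponding to $I$ to the tuple with coordinates $t_i$ for $i \in I$ and $0$ otherwise, and this embedding lands in the standard simplex $\Delta^{\IX}$ after rescaling by $\val(a)$.

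Thus, a face $\Delta^I$ of $\Delta^{\IX}$ lies in $S_\fX$ precisely when some (equivalently, any) étale local model has $D_I \neq \emptyset$, i.e.\ when the global stratum $D_I$ is non-empty. The main technical point, which I would defer to Berkovich's work, is that this gluing is well-defined and compatible across étale charts; the irreducibility hypothesis on the strata is exactly what is needed to ensure that no identifications beyond those prescribed by the dual intersection complex occur.
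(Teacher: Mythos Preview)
The paper does not give a proof of this lemma at all: it simply states that the general constructions in \cite{Berkovich_Smooth_1999} imply it (and the next lemma). Your proposal is precisely an unpacking of that citation in the strictly semi-stable case---local identification of the skeleton of $\fS(n,d,a)$ with the $d$-simplex, then gluing along strata using the irreducibility hypothesis---so you are following the same route as the paper, only with more detail. One small inaccuracy worth cleaning up: the phrase ``after rescaling by $\val(a)$'' suggests a single global scale, but different maximal faces $\Delta^I$ carry different constants $a_I$ (this is exactly the content of the paper's next lemma, where $S_I$ has size $\val(a_I)$); since the present lemma is purely about which faces occur and not about the metric, this does not affect the argument, but you should not conflate the combinatorial embedding into $\Delta^{\IX}$ with the affine embedding into $\R^{\IX}$.
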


A face $\Delta^I\subset S_\fX$ for $I\subset\IX$ is called \emph{maximal} if it does not belong to another face of higher dimension.
Let $\Delta^I$ be a maximal face of $S_\fX$ of dimension $d$. By Definition \ref{def:strictly semi-stable formal scheme}, there exists an affine open subscheme $\fU$ in $\fX$ such that the structural morphism $\fU\rightarrow\Spf k^\circ$ factorizes through an étale morphism $\phi\colon\fU\rightarrow\fS(n,d,a)$ for some $n\in\mathbb N$, $a\in k^{\circ\circ}\setminus 0$, and that $D_I\cap\fU_s$ is given by the equations $T_0=\dots=T_{d}=0$. We denote this element $a\in k^{\circ\circ}\setminus 0$ by $a_I$.


Let
\[S_I=\Set{\sum_{i\in I} r_i\langle D_i\rangle | r_i\in\R_{\geq 0}, \sum_{i\in I}r_i=\val(a_I)}\subset \R^{\IX} ,\]
where $(\langle D_i\rangle)_{i\in\IX}$ is regarded as the standard basis of $\R^{\IX}$.

\begin{lem}\label{lem:skeleton}
The skeleton $S_\fX$ can be identified with the union of the simplexes $S_I$ over all maximal faces $I\subset\IX$. Thus we obtain an embedding of $S_\fX$ into $\R^{\IX}$.
\end{lem}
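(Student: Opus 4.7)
The plan is to identify the skeleton $S_\fX$ chart-by-chart using the étale local structure of Definition \ref{def:strictly semi-stable formal scheme}, and then to verify that the charts glue consistently to give an embedding into $\R^\IX$.

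First I would analyze the standard model $\fS(n,d,a)$. Its generic fiber $\fS(n,d,a)_\eta$ is a strictly affinoid domain on which there is a canonical family of monomial (``Gauss'') points $x_r$ parametrized by $r = (r_0, \dots, r_d) \in \R_{\geq 0}^{d+1}$ with $\sum r_i = \val(a)$, defined by
\[|f(x_r)| = \max_{\nu,\mu} |c_{\nu,\mu}|\, e^{-\langle r,\nu\rangle}\]
for $f = \sum c_{\nu,\mu}\, T^\nu S^\mu$. The set of these $x_r$ is exactly the skeleton of $\fS(n,d,a)_\eta$ in Berkovich's construction, and the assignment $r \mapsto x_r$ identifies it with the simplex $\{\sum r_i \langle D_i\rangle : r_i \geq 0,\ \sum r_i = \val(a)\}$. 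Now, using the étale morphism $\phi \colon \fU \to \fS(n,d,a_I)$ supplied by strict semi-stability, I would invoke the compatibility of Berkovich's skeleton construction with étale morphisms (\cite{Berkovich_Smooth_1999}): $\phi_\eta$ restricts to a homeomorphism from the preimage of the open face over the stratum $\{T_0 = \cdots = T_d = 0\}$ onto the open face of $S_\fX$ associated with $I$. Transporting the description above yields a canonical identification of that open face with the relative interior of $S_I \subset \R^\IX$.

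Finally, I would glue the simplices $S_I$ over all maximal faces. The content here is to check that whenever $I \subset I'$ with $\Delta^{I'}$ maximal, the boundary face of $S_{I'}$ obtained by setting the coordinates indexed by $I' \setminus I$ to zero agrees, as a subset of $\R^\IX$, with the simplex $S_I$ coming from a chart for $I$. This reduces to showing that $\val(a_I)$ depends only on the stratum $D_I$ and not on the chart: two étale local trivializations of $\fX$ near $D_I$ differ by transition data which send the defining equation $T_0 \cdots T_d = a_I$ to itself up to multiplication by a unit of $k^\circ$, which leaves $\val(a_I)$ invariant. I expect this well-definedness of $\val(a_I)$ (together with the compatibility of the coordinate labellings across charts) to be the main technical point; once it is established, the union of the simplices $S_I$ over maximal faces assembles into the desired embedding $S_\fX \hookrightarrow \R^\IX$, with the simplicial structure matching the one from the previous lemma.
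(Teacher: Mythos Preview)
Your proposal is a reasonable and essentially correct outline of how the lemma can be established: describing the skeleton of the standard block $\fS(n,d,a)$ via Gauss points, transporting along the \'etale charts using Berkovich's functoriality, and then checking that $\val(a_I)$ is chart-independent so the simplices glue.

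However, the paper does not actually supply a proof of this lemma. Both this lemma and the preceding one are introduced by the sentence ``The general constructions in \cite{Berkovich_Smooth_1999} imply the following two lemmas,'' and the paper then simply refers the reader to \cite{Yu_Gromov_2014,Boucksom_Singular_2011,Kontsevich_Non-archimedean_2002,Gubler_Skeletons_2014} for related constructions. So there is nothing to compare your argument against beyond the bare citation: what you have written is a spelled-out version of the content the paper delegates to Berkovich's work, and in that sense it is consistent with (and more explicit than) the paper's treatment.
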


We refer to \cite{Yu_Gromov_2014,Boucksom_Singular_2011,Kontsevich_Non-archimedean_2002,Gubler_Skeletons_2014} for related constructions.

%

\section{Calculation of vanishing cycles}\label{sec:vanishing cycles}

For any space $X$, we denote by $X\et^\sim$ the category of étale sheaves on $X$ whenever it makes sense. Let $\Lambda=\Z/\ell^\nu \Z$ for any positive integer $\nu$. We denote by $\Lambda_X$ the constant sheaf on $X$ associated to $\Lambda$.
Let $k^s$ be a separable closure of $k$, $\widehat{k^s}$ its completion, and $\widetilde{k^s}$ its residue field. For any scheme $X$ defined over $\Spec\widetilde k$, we denote $\overline X = X\times \widetilde{k^s}$. For any $k$-analytic space $X$, we denote $\overline X = X\times \widehat{k^s}$.

Let $\fX$ be a strictly semi-stable formal scheme over $k^\circ$. Let $\fXbs$ (resp.\ $\fXbe$) denote the special (resp.\ generic) fiber of the formal scheme $\overline{\fX}\coloneqq\fX\mathbin{\widehat{\otimes}_{k^\circ}}(\widehat{k^s})^\circ$ over $(\widehat{k^s})^\circ$. In \cite{Berkovich_Vanishing_1994}, Berkovich constructed two functors $\Theta  \colon {\fX_\eta}\et^\sim\rightarrow{\fX_s}\et^\sim$ and $\Psi_\eta  \colon {\fX_\eta}\et^\sim\rightarrow{\fXbs}\et^\sim$. We call them the specialization functor and the nearby cycles functor\footnote{Our terminology differs from \cite{Berkovich_Vanishing_1994}, where $\Psi$ is called the vanishing cycles functor.} respectively. We denote by $R\Theta$ and $R\Psi$ the corresponding derived functors. Our aim in this section is to compute the sheaf of nearby cycles $R\Psi\LXe$ and the sheaf of vanishing cycles $R\Phi\LXe$ defined as usual as a cone (\cite{SGA7-2} XIII 2.1).

The question being local, we only have to study the affine charts $\phi\colon \fU\rightarrow\fS(n,d,a)$ as in Definition \ref{def:strictly semi-stable formal scheme}. The formal scheme $\fS(n,d,a)$ is the completion of the scheme \[\Spec \left(k^\circ [T_0,\dots,T_{d},S^\pm_{d+1},\dots,S^\pm_n]/(T_0\cdots T_{d}-a)\right)\] along its special fiber. Therefore, by \cite{Berkovich_Vanishing_1994} Corollary 4.5(i) and Corollary 5.3, the calculation is reduced to the case of ordinary schemes.

\begin{prop}[\cite{Rapoport_Lokale_1982}]\label{prop:vanishing cycles}
We have $R^0\Psi\LXe \simeq \LXs$, an exact sequence
\[0\rightarrow\LXs\xrightarrow{\mathit{diag}} \bigoplus_{i\in\IX} \Lambda_{\overline D_i} \longrightarrow R^1 \Psi \LXe (1)\rightarrow 0,\]
and isomorphisms
\[\wedge^q R^1\Psi \LXe \simeq R^q\Psi\LXe, \quad\text{ for } q\geq 1. \]
\end{prop}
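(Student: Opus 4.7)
My plan is to follow the standard semi-stable reduction analysis after first using Berkovich's algebraization to replace the formal–analytic picture by its algebraic counterpart. Since $R\Psi$ is étale-local on the special fiber and compatible with étale base change, I reduce via the charts $\phi\colon\fU\to\fS(n,d,a)$ of Definition \ref{def:strictly semi-stable formal scheme} to the computation on the standard model. The cited Berkovich comparison (Corollary 4.5(i) and Corollary 5.3 of \cite{Berkovich_Vanishing_1994}) then identifies the nearby cycles of this formal scheme with those of the algebraic scheme $X=\Spec k^\circ[T_0,\dots,T_d,S_{d+1}^\pm,\dots,S_n^\pm]/(T_0\cdots T_d-a)$, so the remaining work is a classical SGA\,7-style computation.

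The core of the proof is a stalkwise calculation. At a geometric point $\overline x\in X_s$, let $I\subseteq\{0,\dots,d\}$ record which $T_i$ vanish at $\overline x$. In the strict henselization $X_{(\overline x)}$, the $T_j$ for $j\notin I$ and all the $S_j$ are units, so the generic fiber is, up to a smooth factor contributing trivially to nearby cycles, an étale neighbourhood of $\Spec\widehat{k^s}[T_i^{\pm 1}:i\in I]/(\prod_{i\in I}T_i-a')$ with $\val(a')>0$. Solving for one of the $T_i$ exhibits this as a split torus of rank $|I|-1$ over the algebraically closed field $\widehat{k^s}$. By Kummer theory, $H^1$ with $\mu_{\ell^\nu}$-coefficients is generated by the Kummer classes $c_i$ of the $T_i$ subject to the single relation $\sum_{i\in I}c_i=0$ (because $a'$ becomes an $\ell^\nu$-th power in $\widehat{k^s}$), and higher cohomology is computed as the exterior algebra on these classes via the cup product.

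Globally, the diagonal map $\LXs\to\bigoplus_{i\in\IX}\Lambda_{\overline D_i}$ has cokernel at $\overline x$ equal to $\Lambda^{|I|}/\Lambda$. I match this cokernel with the stalk $(R^1\Psi\LXe(1))_{\overline x}$ by sending the generator of $\Lambda_{\overline D_i}$ to the Kummer class $c_i$, and I upgrade this to a global sheaf map using that each $T_i$ is a genuine global function on $\fS(n,d,a)$, so its Kummer class defines a global section of $R^1\Psi\mu_{\ell^\nu}$ on $\overline D_i$ that extends by zero to the rest of the special fiber. The resulting morphism is a stalkwise isomorphism, hence a sheaf isomorphism. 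The wedge formula $\wedge^q R^1\Psi\LXe\xrightarrow{\sim}R^q\Psi\LXe$ then follows from the cup-product identification already established on stalks.

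The main obstacle lies in the final, sheaf-theoretic step: one must check that the stalkwise identifications assemble into a morphism of étale sheaves on $\fX_s$ compatibly with the étale charts $\phi\colon\fU\to\fS(n,d,a)$, i.e.\ that the Kummer-class construction is functorial across étale localizations. This is the place where Berkovich's comparison results of \cite{Berkovich_Vanishing_1994} are essential; once this is granted, the rest of the argument is a direct computation.
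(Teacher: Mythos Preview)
Your approach is correct and aligns with the paper's. The paper does not actually prove this proposition: it reduces the formal/analytic statement to the algebraic one via Berkovich's comparison theorems (\cite{Berkovich_Vanishing_1994} Corollary~4.5(i) and Corollary~5.3), exactly as you do, and then simply cites \cite{Rapoport_Lokale_1982} for the algebraic computation. Your proposal carries out that cited computation explicitly, via the standard stalkwise Kummer-theory argument, so you are supplying what the paper leaves to the reference.

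One minor comment: the globalization step you flag as the ``main obstacle'' is usually handled more intrinsically than by tracking the coordinate functions $T_i$ across charts. The map $\bigoplus_i\Lambda_{\overline D_i}(-1)\to R^1\Psi\LXe$ can be defined globally as the composite of the purity isomorphism $i_s^*R^1 i_{\eta *}\LXe\simeq\bigoplus_i\Lambda_{\overline D_i}(-1)$ with the natural map $i_s^*R i_{\eta *}\LXe\to R\Psi\LXe$; this is coordinate-free and automatically compatible with the \'etale charts. Your chart-by-chart Kummer construction then serves to verify that this intrinsic map is a stalkwise isomorphism onto the cokernel of the diagonal, which is exactly what you compute. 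So there is no real obstacle here, only a choice of presentation.
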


The sheaf of vanishing cycles $R\Phi\LXe$ is related to the sheaf of nearby cycles $R\Psi\LXe$ through an exact triangle
\begin{equation}\label{eq:exact triangle of vanishing cycles}
\LXs\longrightarrow R\Psi\LXe\longrightarrow R\Phi\LXe\xrightarrow{+1}.
\end{equation}

Let $\Delta^I, I\subset\IX$ be a face of $S_\fX$ and let $j \colon \overline D_I\hookrightarrow\fXbs$ denote the closed immersion. We assume that $\overline D_I$ is a projective variety. Applying $j^*$ to \eqref{eq:exact triangle of vanishing cycles}, we obtain an exact triangle
\begin{equation}\label{eq:j*exact triangle of vanishing cycles}
j^*\LXs\longrightarrow j^*R\Psi\LXe\longrightarrow j^*R\Phi\LXe\xrightarrow{+1}.
\end{equation}
Taking global sections $R\Gamma$, we obtain a long exact sequence
\begin{equation}\label{eq:long exact sequence}
\dots\rightarrow R^1\Gamma\left(j^* R\Psi\LXe\right)\xrightarrow{\beta^*} R^1\Gamma\left(j^*R\Phi\LXe\right)\xrightarrow{\alpha^*} R^2\Gamma\big(j^*\LXs\big)\rightarrow\cdots ,
\end{equation}
where we denote the two arrows above by $\beta^*$ and $\alpha^*$ respectively.

\begin{cor}\label{cor:vanishing cycles}
We have an isomorphism
\[R^1\Gamma\left(j^* R\Phi\LXe\right)\simeq \Coker\big(\Lambda\xrightarrow{\Delta}\Lambda^J\big)(-1) ,\]
where $\Delta$ denotes the diagonal map that sends an element $\lambda\in\Lambda$ to \\ $(\lambda,\dots,\lambda)\in\Lambda^J$.
Moreover, the map
\[\alpha^* \colon \Coker\big(\Lambda\xrightarrow{\Delta}\Lambda^J\big)(-1)\longrightarrow R^2\Gamma\big(j^*\LXs\big)\simeq H^2\et\big(\overline D_I,\Lambda\big)\]
is induced by the cycle class map in étale cohomology.
\end{cor}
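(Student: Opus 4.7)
I first reduce $R^1\Gamma(j^*R\Phi\LXe)$ to a single sheaf cohomology via the vanishing cycles exact triangle. Since $\LXs \xrightarrow{\sim} R^0\Psi\LXe$ by Proposition \ref{prop:vanishing cycles}, the long exact sequence of cohomology sheaves attached to the triangle \eqref{eq:exact triangle of vanishing cycles} gives $R^0\Phi\LXe=0$ and $R^q\Phi\LXe\simeq R^q\Psi\LXe$ for $q\geq 1$. In the hypercohomology spectral sequence $E_2^{p,q}=H^p(\overline D_I, j^*R^q\Phi\LXe)\Rightarrow R^{p+q}\Gamma(j^*R\Phi\LXe)$, the term $E_2^{1,0}$ vanishes and the outgoing differential $d_2\colon E_2^{0,1}\to E_2^{2,0}=0$ is also zero, so $R^1\Gamma(j^*R\Phi\LXe)\cong H^0(\overline D_I, j^*R^1\Psi\LXe)$.

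Next I pull back the short exact sequence of Proposition \ref{prop:vanishing cycles} along $j$. The sheaf $j^*\Lambda_{\overline D_i}$ vanishes for $i\notin J_I$ and otherwise equals the pushforward of the constant sheaf $\Lambda$ along the closed immersion $\overline D_{I\cup\{i\}}\hookrightarrow\overline D_I$; in particular it equals $\Lambda_{\overline D_I}$ itself for $i\in I\subset J_I$. Taking the long exact sequence of cohomology on $\overline D_I$, the map $H^0(\overline D_I,\Lambda)\to\bigoplus_{i\in J_I}H^0(\overline D_{I\cup\{i\}},\Lambda)$ becomes the diagonal $\Lambda\xrightarrow{\Delta}\Lambda^{J_I}$ by connectedness of the (irreducible, projective) strata. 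The following arrow $H^1(\overline D_I,\Lambda)\to\bigoplus_{i\in J_I}H^1(\overline D_{I\cup\{i\}},\Lambda)$ is injective: for any $i\in I\subset J_I$ the immersion $\overline D_{I\cup\{i\}}\hookrightarrow\overline D_I$ is the identity, making the corresponding component of the arrow the identity on $H^1(\overline D_I,\Lambda)$. Combining gives $H^0(\overline D_I, j^*R^1\Psi\LXe(1))\cong\Coker(\Lambda\xrightarrow{\Delta}\Lambda^{J_I})$, and untwisting by $(-1)$ yields the first assertion.

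Finally, for the identification of $\alpha^*$ with the cycle class map, observe that under the above isomorphism $\alpha^*$ coincides with the differential $d_2\colon H^0(\overline D_I, j^*R^1\Psi\LXe)\to H^2(\overline D_I,\Lambda)$ in the hypercohomology spectral sequence for $R\Gamma(\overline D_I,\, j^*R\Psi\LXe)$; equivalently, it is the $H^0$-image of the attaching morphism $j^*R^1\Psi\LXe\to\Lambda[2]$ from the Postnikov triangle of the two-step truncation $\tau_{\leq 1}(j^*R\Psi\LXe)$. A class $(\lambda_i)_{i\in J_I}$ (modulo the diagonal) lifts, via the exact sequence of Proposition \ref{prop:vanishing cycles}, to the section $(\lambda_i)$ of $\bigoplus_{i\in J_I}\iota_{i*}\Lambda_{\overline D_{I\cup\{i\}}}$, whose image under the attaching morphism is obtained by composing with the Gysin morphism $\iota_{i*}\Lambda_{\overline D_{I\cup\{i\}}}\to\Lambda_{\overline D_I}(1)[2]$; the resulting class is $\sum_i\lambda_i\, c_1(\mathcal O(D_i)|_{\overline D_I})\in H^2(\overline D_I,\Lambda(1))$. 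The main technical obstacle is verifying that the Postnikov attaching morphism really does agree with this sum of Gysin morphisms; I would establish this by first working étale-locally on the standard chart $\fS(n,d,a)$, where the divisors, the nearby cycles sheaf and the Chern classes admit fully explicit descriptions that can be matched by direct computation, and then passing to the general case by functoriality of nearby cycles under étale morphisms.
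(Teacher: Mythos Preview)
Your proof of the first isomorphism is correct and in fact more explicit than the paper's, which simply declares it an immediate corollary of Proposition~\ref{prop:vanishing cycles}; your spectral-sequence reduction and the injectivity argument via the components $i\in I$ are fine.

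For the identification of $\alpha^*$, however, the paper takes a different and cleaner route. Rather than identifying $\alpha^*$ with a Postnikov $d_2$ and then computing that differential on standard charts, the paper passes (via the comparison theorem with algebraic vanishing cycles) to the scheme-theoretic setting and compares two exact triangles: the open--closed triangle $i_{s!}i_s^!\LX\to\LX\to i_{\eta*}i_\eta^*\LX\xrightarrow{+1}$ and the vanishing-cycles triangle $\LXs\to R\Psi\LXe\to R\Phi\LXe\xrightarrow{+1}$. The adjunction map $i_{\eta*}i_\eta^*\LX\to i_{\overline\eta*}i_{\overline\eta}^*\LX$ induces, after pulling back to $\overline D_I$, a morphism from the first triangle to the second in which the leftmost vertical arrow is the identity. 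On the other hand, the purity isomorphism $R^1 i_{\eta*}i_\eta^*\LX\simeq\bigoplus_i\Lambda_{D_i}(-1)$ \emph{is} the cycle-class description by definition, and the boundary map of the first triangle into $H^2$ is therefore the cycle class map on the nose. Commutativity of the resulting square then transports this identification to $\alpha^*$. This sidesteps exactly the ``main technical obstacle'' you flag: no explicit local computation of the attaching morphism is needed, because the cycle classes enter through purity on the auxiliary triangle rather than through a hand-verified formula on $\fS(n,d,a)$. Your approach should also go through, but the paper's use of the open--closed triangle is more conceptual and avoids the chart computation entirely.
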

\begin{proof}
The first statement is an immediate corollary of Proposition \ref{prop:vanishing cycles}. Let us explain the second statement. Again by comparison theorems between analytic and algebraic vanishing cycles, it suffices to work in the setting of ordinary schemes. In this proof, let us temporarily assume that $\fX$ is a scheme over $\Spec k^\circ$ instead of a formal scheme. Let
\[i_s\colon \fX_s\rightarrow\fX,\quad i_\eta\colon \fX_\eta\rightarrow\fX\]
denote the inclusions, and let
\[i_{\overline{s}}\colon \fXbs\rightarrow\fX,\quad i_{\overline{\eta}}\colon \fXbe\rightarrow\fX,\quad j_{\overline D}\colon \overline D_{I_v}\rightarrow\fX\]
denote the natural morphisms. We have an exact triangle
\begin{equation}\label{eq:triangle of pair}
i_{s!} i^!_s \LX\longrightarrow\LX\longrightarrow i_{\eta*}i^*_\eta\LX\xrightarrow{+1}.
\end{equation}
By purity, we have an isomorphism
\[R^1 i_{\eta*}i^*_\eta\LX\simeq\bigoplus_{i\in\IX}\Lambda_{D_i}(-1)\]
given by the cohomology classes of the divisors $D_i$ for $i\in\IX$.
Applying $j^*_{\overline{D}}$ to \eqref{eq:triangle of pair} and shifting by 1, we obtain an exact triangle
\[j^*_{\overline{D}}\LX \longrightarrow j^*_{\overline{D}} i_{\eta*}i^*_\eta\LX \longrightarrow j^*_{\overline{D}} i_{s!} i^!_s \LX[1] \xrightarrow{+1}.\]
We have a morphism from the exact triangle above to the exact triangle \eqref{eq:j*exact triangle of vanishing cycles}
\begin{equation*}
\begin{tikzcd}
j^*_{\overline{D}}\LX \arrow{r}{} \arrow{d}{f_1} & j^*_{\overline{D}} i_{\eta*}i^*_\eta\LX \arrow{r}{} \arrow{d}{f_2} &  j^*_{\overline{D}} i_{s!} i^!_s \LX[1] \arrow{r}{b} \arrow{d}{f_3} & j^*_{\overline{D}}\LX \arrow{d}{f_4}\\
j^*\LXs \arrow{r} & j^*R\Psi\LXe \arrow{r} & j^*R\Phi\LXe \arrow{r}{b'} & j^*\LXs[1],
\end{tikzcd}
\end{equation*}
where the maps $f_1$, $f_4$ are identities, the map $f_2$ is obtained from adjunction
\[i_{\eta *} i^*_\eta \LX \longrightarrow i_{\overline{\eta}*} i^*_{\overline{\eta}} \LX,\]
and the map $f_3$ follows from the properties of triangulated categories. Now the second statement in the corollary follows from the commutativity
\[f_4\circ b=b'\circ f_3\]
and the definition of the cycle class map in étale cohomology.
\end{proof}

\section{Deformation of analytic tubes}\label{sec:tube}

The retraction map $\tau\colon\fX_\eta\to S_\fX$ constructed by Berkovich \cite{Berkovich_Smooth_1999} is easy to describe for the standard formal scheme $\fB\coloneqq\fS(n,d,a)$ defined as in \eqref{eq:standard formal scheme}.
By Lemma \ref{lem:skeleton}, we have
\[S_{\mathfrak B}=\Set{(r_0,\dots,r_{d})\in\R^{d+1}_{\geq 0} | \sum_{i=0}^{d} r_i = \val a}\subset\R^{d+1} .\]

\begin{lem}\label{lem@retraction}
The retraction map $\tau_\fB \colon \mathfrak B_\eta\rightarrow S_\mathfrak B$ takes $x\in\mathfrak B_\eta$ to the point
\[(\val T_0(x),\dots,\val T_{d}(x))\in S_{\fB}\subset \R^{d+1} .\]
\end{lem}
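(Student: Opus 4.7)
The plan is to unwind Berkovich's construction of the retraction in the explicit case of the standard polystable model $\fB = \fS(n,d,a)$, where everything can be written down in coordinates.

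First, I would check that the proposed assignment is well-defined with image in $S_\fB$. The functions $T_0, \dots, T_d$ are power-bounded on $\fB_\eta$ because they extend to the formal model $\fB$, so $\val T_i(x) \geq 0$ for every $x \in \fB_\eta$. The defining relation $T_0 \cdots T_d = a$ yields $\sum_{i=0}^{d} \val T_i(x) = \val a$. Together these place $(\val T_0(x), \dots, \val T_d(x))$ in the simplex $S_\fB$ described by Lemma \ref{lem:skeleton}.

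Second, I would recall Berkovich's identification of the skeleton of a standard polystable formal scheme as the set of Shilov points of formal fibers. Concretely, a point $r = (r_0, \dots, r_d) \in S_\fB$ with $\sum r_i = \val a$ corresponds to the multiplicative seminorm $\|\cdot\|_r$ on the affinoid algebra
\[
\mathcal A = k\{T_0,\dots,T_d,S^{\pm}_{d+1},\dots,S^{\pm}_n\}/(T_0\cdots T_d - a)
\]
obtained as the Gauss seminorm of the polyannulus $\{|T_i| = e^{-r_i},\ |S_j| = 1\}$. These seminorms are precisely the distinguished Shilov points used by Berkovich to embed $S_\fB$ inside $\fB_\eta$.

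Third, the retraction $\tau_\fB$ from \cite{Berkovich_Smooth_1999} is produced by an explicit contracting homotopy built from the affinoid torus action on the coordinates: one continuously deforms the $S_j$-coordinates to have absolute value one and rescales each $T_i$ to radius $e^{-\val T_i(x)}$, landing at the Gauss seminorm of the corresponding polyannulus. For the point $x$ this produces the seminorm $\|\cdot\|_r$ with $r_i = \val T_i(x)$, whose coordinates in $\R^{d+1}$ are exactly $(\val T_0(x), \dots, \val T_d(x))$, yielding the asserted formula.

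The main difficulty is not conceptual but bookkeeping: matching Berkovich's general polystable deformation, formulated in terms of formal fibers and Shilov points, with the concrete coordinate-wise valuation map on $\fB_\eta$. Once one has unpacked Berkovich's construction for this particular model, the formula essentially reads itself off from the definition of the Gauss seminorm, so no additional estimates are required.
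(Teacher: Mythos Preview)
Your proposal is correct. Note that the paper does not actually supply a proof of this lemma: it is stated as an immediate reading of Berkovich's construction for the standard model $\fB=\fS(n,d,a)$, prefaced only by the remark that the retraction ``is easy to describe'' in this case. Your write-up is exactly the unpacking the paper leaves to the reader---checking that $(\val T_0(x),\dots,\val T_d(x))$ lands in $S_\fB$, identifying the points of $S_\fB$ with Gauss points of polyannuli, and observing that Berkovich's homotopy sends $x$ to the Gauss point with radii $|T_i(x)|$---so there is no genuine difference in approach, only in level of detail.
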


For a general strictly semi-stable formal scheme $\fX$, the retraction map $\tau \colon \fX_\eta\rightarrow S_\fX$ is defined by gluing the construction above in the étale topology.
Our aim in this section is to describe the local geometry of this retraction in terms of nearby cycles.
More precisely, let $v$ be a point in $S_\fX$ sitting in the relative interior of a $d_v$-dimensional face $\Delta^{I_v}$ for $I_v=\{i_0,\dots,i_{d_v}\}\subset\IX$.
Denote by $j$ the closed immersion $\overline D_{I_v}\hookrightarrow \fXbs$. Let
\begin{equation}\label{eq:neighborhood V}
V^{s_0\dots s_{d_v}}_{I_v} = S_\fX \cap \Set{(r_0,\dots,r_N)\in\R^{\IX} | r_{i_j}\geq s_j \text{ for }j\in\{0,\dots,d_v\}} ,
\end{equation}
where $s_0,\dots,s_{d_v}\in \val k^{\circ\circ}$. It follow from the definition that 

\begin{lem}
$V^{s_0\dots s_{d_v}}_{I_v}\neq\emptyset$ if and only if $s_0+\dots+s_{d_v}\leq \val a_{I_v}$.
\end{lem}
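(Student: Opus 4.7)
The plan is to use the concrete realization of the skeleton in $\R^{\IX}$ given by Lemma \ref{lem:skeleton}: $S_\fX$ is the union, over maximal faces $J \subset \IX$, of the simplices $S_J = \{\sum_{i \in J} r_i \langle D_i\rangle : r_i \geq 0,\ \sum_{i \in J} r_i = \val(a_J)\}$. Since the inequalities defining $V^{s_0\dots s_{d_v}}_{I_v}$ only involve the coordinates indexed by $I_v$, the problem reduces to working inside some maximal face $J \subset \IX$ containing $I_v$, which exists because every face of a finite simplicial complex is contained in a maximal one. A preliminary remark: one should check that $\val(a_J)$ does not depend on the choice of such a $J$, so that the notation $\val(a_{I_v})$ is well-defined; this follows from the compatibility of the étale charts in Definition \ref{def:strictly semi-stable formal scheme} used to glue the skeleton together.

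For the implication $\sum_j s_j \leq \val(a_{I_v}) \Rightarrow V^{s_0\dots s_{d_v}}_{I_v}\neq\emptyset$, I simply exhibit a point. Fix a maximal $J \supset I_v$ and define $p \in \R^{\IX}$ by $r_{i_0} = \val(a_{I_v}) - (s_1 + \dots + s_{d_v})$, $r_{i_j} = s_j$ for $1 \leq j \leq d_v$, and $r_i = 0$ for $i \in \IX \setminus I_v$. The hypothesis gives $r_{i_0} \geq s_0 \geq 0$, and the nonzero coordinates sum to $\val(a_{I_v}) = \val(a_J)$, so $p$ lies in $S_J \subset S_\fX$; by construction $r_{i_j} \geq s_j$ for each $j$, so $p \in V^{s_0\dots s_{d_v}}_{I_v}$.

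For the converse, let $p = (r_0, \dots, r_N) \in V^{s_0\dots s_{d_v}}_{I_v}$ and pick a maximal $J'$ with $p \in S_{J'}$. If $i_j \notin J'$ then $r_{i_j} = 0$, so the inequality $s_j \leq r_{i_j}$ together with $s_j \in \val k^{\circ\circ} \subset \R_{\geq 0}$ forces $s_j = 0$; in every case we get $s_j \leq r_{i_j}$ with $r_{i_j}$ vanishing for $i_j \notin J'$. Summing over $j$ and using $r_i \geq 0$ together with $\sum_{i \in J'} r_i = \val(a_{J'}) = \val(a_{I_v})$ then yields $\sum_j s_j \leq \val(a_{I_v})$.

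The proof is essentially a direct unpacking of Lemma \ref{lem:skeleton}, and the only point requiring genuine care is the well-definedness of $\val(a_{I_v})$ across different maximal faces containing $I_v$; everything else amounts to writing down an explicit affine-linear combination.
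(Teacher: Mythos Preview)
Your argument is correct, and in fact the paper gives no proof at all: the lemma is asserted to follow ``from the definition'' immediately after Eq.\ \eqref{eq:neighborhood V}. What you have written is exactly the natural elaboration of that remark, using the description of $S_\fX$ from Lemma \ref{lem:skeleton}.

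Two small comments. First, your aside about the well-definedness of $\val(a_{I_v})$ when $I_v$ is not itself maximal is a genuine point the paper leaves implicit; it is good that you flagged it. Second, in your converse direction you allow the possibility $i_j\notin J'$ and then argue $s_j=0$. Since $s_j\in\val k^{\circ\circ}$ is strictly positive (the valuation is discrete and $k^{\circ\circ}$ is the maximal ideal), this case in fact never occurs: $r_{i_j}\geq s_j>0$ forces $i_j\in J'$ directly, so $I_v\subset J'$ and your appeal to $\val(a_{J'})=\val(a_{I_v})$ is clean. Your case split is harmless, just slightly more than needed.
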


We suppose from now on that $V^{s_0\dots s_{d_v}}_{I_v}\neq\emptyset$, and denote by $\iota$ the inclusion map $\tau^{-1}\colon (V^{s_0\dots s_{d_v}}_{I_v})\hookrightarrow \fX_\eta$.

\begin{thm}\label{thm:tube}
We have a quasi isomorphism
\[j_* j^* R\Psi \left(\LXe\right) \xrightarrow{\sim} R\Psi \left( R\iota_*\iota^* \LXe\right) .\]
\end{thm}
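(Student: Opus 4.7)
The plan is to verify the quasi-isomorphism stalkwise on $\fXbs$. It suffices to show:
(i) $R\Psi(R\iota_*\iota^*\LXe)$ is supported on $\overline D_{I_v}$, and
(ii) the natural map $R\Psi\LXe \to R\Psi(R\iota_*\iota^*\LXe)$ (induced by the unit $\LXe\to R\iota_*\iota^*\LXe$) restricts along $j$ to an isomorphism.

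First I would reduce to a local situation. Since nearby cycles commute with étale base change, I may work in an étale chart $\fU\to\fS(n,d,a)$, and by Berkovich's comparison between formal and algebraic nearby cycles (\cite{Berkovich_Vanishing_1994}, Corollaries~4.5(i) and~5.3), I further replace $\fU$ by the corresponding algebraic scheme. For a geometric point $\bar x$ of $\fXbs$, let $M(\bar x)$ denote the analytic Milnor fiber at $\bar x$, so that $(R\Psi\mathcal F)_{\bar x}\simeq R\Gamma(M(\bar x),\mathcal F)$ for any sheaf $\mathcal F$ on $\fX_\eta$. Since $\iota$ is a closed embedding of analytic domains, $R\iota_*\iota^*\LXe$ coincides with the extension by zero of $\Lambda_U$, and its nearby cycles stalk at $\bar x$ simplifies to $R\Gamma(M(\bar x)\cap U,\Lambda)$, where $U=\tau^{-1}\big(V^{s_0\dots s_{d_v}}_{I_v}\big)$.

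Part~(i) is direct: if $\bar x\notin\overline D_{I_v}$, pick $j$ with $T_{i_j}(\bar x)\neq 0$; then any $x\in M(\bar x)$ satisfies $\val T_{i_j}(x)=0<s_j$, so by Lemma~\ref{lem@retraction} $\tau(x)\notin V^{s_0\dots s_{d_v}}_{I_v}$, giving $M(\bar x)\cap U=\emptyset$. For part~(ii), let $I\supseteq I_v$ be the stratum of $\bar x\in\overline D_{I_v}$. Working in the standard chart, $M(\bar x)$ is analytically a product of the Milnor fiber of $\fS(|I|-1,|I|-1,a_I)$ with a smooth factor, and $M(\bar x)\cap U$ is the closed sub-domain cut out by the additional inequalities $\val T_{i_j}\geq s_j$ for $j=0,\dots,d_v$. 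I would show that the inclusion $M(\bar x)\cap U\hookrightarrow M(\bar x)$ induces an isomorphism on $\Lambda$-étale cohomology by interpolating through a continuous family of closed analytic sub-domains $U_t$ (obtained by scaling the parameters $s_j$ from $0$ to their target values) and arguing that $R\Gamma(U_t,\Lambda)$ is locally constant in $t$. The two endpoints would then match the Rapoport--Zink formula of Proposition~\ref{prop:vanishing cycles}.

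The main obstacle is precisely this cohomological invariance in part~(ii): in the non-archimedean setting, shrinking a closed analytic sub-domain does not \emph{a priori} preserve étale cohomology, so making the interpolation rigorous requires care. I expect the argument to proceed via a proper-base-change-type result over an auxiliary parameter space, combined with Berkovich's structure theory for polystable formal schemes — this is presumably the content signaled by the section title \enquote{Deformation of analytic tubes}.
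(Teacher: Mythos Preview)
Your reduction to conditions (i) and (ii) matches the paper exactly, and your argument for (i) is fine. The gap is, as you yourself flag, in (ii): the interpolation-by-scaling-$s_j$ argument is not made rigorous, and in the analytic étale topology there is no general homotopy-invariance principle to appeal to. The paper does \emph{not} attempt to prove cohomological invariance along a continuous family of tubes.

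Instead, the key idea you are missing is that the tube $U=\tau^{-1}\big(V^{s_0\dots s_{d_v}}_{I_v}\big)$ is itself the generic fiber of another strictly semi-stable formal scheme, mapping to $\fB$ by a \emph{morphism of formal schemes}. Concretely, on the standard chart $\fB=\fS(n,d,a)$ one picks $a_0,\dots,a_{d_v}\in k^{\circ\circ}$ with $\val a_j=s_j$, sets $a'=a\cdot a_0^{-1}\cdots a_{d_v}^{-1}$ and $\tfB=\fS(n,d,a')$, and defines $f\colon\tfB\to\fB$ by $T_j\mapsto a_jT_j$ for $j\le d_v$ and the identity on the remaining coordinates. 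Then $f_\eta$ identifies $\tfB_\eta$ with $U$, so $R\iota_*\iota^*\Lambda_{\fB_\eta}\simeq Rf_{\eta*}\Lambda_{\tfB_\eta}$. One now invokes Berkovich's compatibility $R\Psi\circ Rf_{\eta*}\simeq Rf_{\overline s*}\circ R\Psi$ (\cite{Berkovich_Vanishing_1994} Corollary~4.5(ii)). Since $f_s$ factors through $D_{I_v}$ this gives (ii) immediately; and since the fibers of $f_{\overline s}$ are cohomologically trivial and $R\Psi\Lambda_{\tfB_\eta}$ is again computed by Proposition~\ref{prop:vanishing cycles}, condition (i) follows by direct comparison of the two Rapoport--Zink descriptions.

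So the rescaling morphism $f$ replaces your proposed deformation of analytic sub-domains inside a fixed space by a morphism of formal models, for which compatibility with $R\Psi$ is a theorem rather than a hope. A smaller point: calling $R\iota_*\iota^*\LXe$ an ``extension by zero'' is not accurate for an affinoid domain inclusion in the étale topology; it is precisely the identification $R\iota_*\iota^*\Lambda\simeq Rf_{\eta*}\Lambda$ above that makes this object tractable.
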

\begin{proof}
By adjunction, we have a morphism $\LXe\rightarrow R\iota_*\iota^*\LXe$, thus a morphism \[j_* j^* R\Psi \big(\LXe\big)\xrightarrow{\gamma} j_* j^* R\Psi \big(R\iota_*\iota^* \LXe\big).\] In order to prove the theorem, we only have to show that
\begin{enumerate}[(i)]
\item \label{item:qis}The morphism $\gamma$ is a quasi-isomorphism.
\item \label{item:support}The sheaf $R\Psi (R\iota_*\iota^* \LXe)$ is supported on $\overline D_{I_v}$.
\end{enumerate}

The properties being local, we only have to show them for the formal scheme $\fB=\fS(n,d,a)$. We assume that $d_v\leq d$ because otherwise it will not have contributions to the stratum $D_{I_v}$. The special fiber $\fB_s$ is given by the equation $T_0\cdots T_{d}=0$, and we assume that $D_{I_v}\subset \fB_s$ is further cut out by the equations $T_{d_v} = \dots = T_{d}=0$.

Now pick any elements $a_0,\dots,a_{d_v}\in k^{\circ\circ}$ such that $\val a_j = s_j$. Let $a'=a\cdot a_0^{-1}\cdots a_{d_v}^{-1}$, $\tfB=\fS(n,d,a')$. Let $f \colon \tfB\rightarrow\fB$ be the morphism given by the algebra homomorphism
\begin{multline*}
k^\circ\{T_0,\dots,T_{d},S_{d+1}^\pm,\dots,S_n^\pm\}/(T_0\cdots T_{d}-a)\longrightarrow \\
k^\circ\{T_0,\dots,T_{d},S_{d+1}^\pm,\dots,S_n^\pm\}/(T_0\cdots T_{d}-a') ,
\end{multline*}
which takes
\begin{align*}
T_j&\longmapsto a_j T_j \qquad\text{ for } j=0,\dots,d_v,\\
T_j&\longmapsto T_j \qquad\text{ for } j=d_v+1,\dots,d,\\
S_j&\longmapsto S_j \qquad\text{ for } j=d+1,\dots,n .
\end{align*}

We have
\begin{equation}\label{eq:affinoid inclusion}
Rf_{\eta *}\Lambda_{\tfB_\eta}\simeq R\iota_*\iota^* \Lambda_{\fB_\eta}.
\end{equation}
By \cite{Berkovich_Vanishing_1994} Corollary 4.5(ii), we have
\begin{equation}\label{eq:commutativity of nearby cycles}
R\Psi \Big(Rf_{\eta *} \Lambda_{\tfB_\eta}\Big)\xrightarrow{\sim}Rf_{\overline s *} \Big(R\Psi \big(\Lambda_{\tfB_\eta}\big)\Big).
\end{equation}
Combining \eqref{eq:affinoid inclusion} and \eqref{eq:commutativity of nearby cycles}, we obtain \[R\Psi \Big(R\iota_*\iota^*\Lambda_{\fB_\eta}\Big)\simeq Rf_{\overline s*}\left(R\Psi\big(\Lambda_{\tfB_\eta}\big)\right) .\] This shows (\ref{item:support}). Then (\ref{item:qis}) follows from the calculation of nearby cycles in the last section and the fact that the fibers of $f_{\overline s}$ are all cohomologically trivial.
\end{proof}

\begin{rem}
The geometry behind Theorem \ref{thm:tube} is that the étale cohomology does not change when we deform certain tubular neighborhoods.
A scheme theoretic analog can be found in \cite{Dat_Nearby_2012}.
Our approach may be adapted to give a better understanding of the result loc.\ cit.
\end{rem}

\section{Cohomological interpretation of tropical weights}\label{sec:weights}

Let $C$ be a compact quasi-smooth $k$-analytic curve, and $f \colon C\rightarrow\fX_\eta$ a $k$-analytic morphism. The image of $C$ under $\tau\circ f$ is a one-dimensional\footnote{In degenerate situations, the tropical curve $C\trop$ can be zero-dimensional. We are not interested in such cases.} polyhedral complex embedded in the skeleton $S_\fX$.
We denote it by $C\trop$, and call it the associated \emph{tropical curve}.
We call the 0-dimensional faces of $C\trop$ vertices and the 1-dimensional faces of $C\trop$ edges. We denote by $e^\circ$ the interior of an edge $e$.

\begin{prop}\label{prop:annuli}
There exists a subdivision of the edges of the tropical curve $C\trop$ such that, if we denote by $\overline {C\trop}$ the polyhedral complex after the subdivision of edges, then
\begin{enumerate}[(1)]
\item For any edge $e$ of $\overline{C\trop}$, each connected component of $(\tau\circ f)^{-1}(e^\circ)$ is isomorphic to an open annulus.
\item For each annulus $A$ as above, there exists an open subscheme $\fU$ in $\fX$, equipped with an étale morphism $\phi\colon \fU\rightarrow\fS(n,d,a)$ as in Definition \ref{def:strictly semi-stable formal scheme}, such that $f(A)$ is contained in $\fU_\eta$.
\end{enumerate} 
\end{prop}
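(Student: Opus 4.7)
The plan is to combine the semi-stable reduction theorem for Berkovich curves with the explicit formula for the retraction $\tau$ on a standard chart given in Lemma \ref{lem@retraction}. First, I would apply semi-stable reduction to the compact quasi-smooth curve $C$ (passing, if necessary, to a finite separable extension of $k$, which is harmless for the statement) to obtain a strictly semi-stable formal model $\mathfrak{C}$ of $C$. Its skeleton $S_\mathfrak{C}$ is a finite graph embedded in $C$, equipped with a retraction $\tau_\mathfrak{C}\colon C\to S_\mathfrak{C}$ whose fibers away from the vertices are either single points (on $S_\mathfrak{C}$) or sit inside open discs, while each open edge of $S_\mathfrak{C}$ has a tubular neighborhood $\tau_\mathfrak{C}^{-1}(e'^\circ)$ that is an open annulus. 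Refining $\mathfrak{C}$ further by admissible blow-ups, I would arrange that $f$ sends each such open disc and each such annular tube into a single affine chart $\fU_\alpha\to\fS(n_\alpha,d_\alpha,a_\alpha)$ of $\fX$ taken from a fixed finite open cover as in Definition \ref{def:strictly semi-stable formal scheme}. By the explicit formula of Lemma \ref{lem@retraction}, $\tau\circ f$ is then constant on the open discs and factors piecewise affinely on the tubes, so we obtain a factorization $\tau\circ f = g\circ \tau_\mathfrak{C}$ for a piecewise affine map $g\colon S_\mathfrak{C}\to C\trop\subset S_\fX$.

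Once this factorization is in place, the proposition becomes a combinatorial statement about $g$. I would subdivide the edges of $C\trop$ by adding as new vertices the $g$-images of all vertices of $S_\mathfrak{C}$, together with all points where the image of an edge of $S_\mathfrak{C}$ crosses the boundary of a face $\Delta^I$ of $S_\fX$ or the boundary of one of the chart skeleta $S_{\fU_\alpha}$. Call the result $\overline{C\trop}$; the finiteness of $S_\mathfrak{C}$ and of the chosen affine cover make this subdivision finite. By construction, on every edge of $\overline{C\trop}$ the map $g$ is affine with a well-defined assignment to a single chart $\fU_\alpha$, so $g^{-1}(e^\circ)$ is a disjoint union of open sub-edges of $S_\mathfrak{C}$. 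Pulling back by $\tau_\mathfrak{C}$, each connected component of $(\tau\circ f)^{-1}(e^\circ)$ is the annular tube around one such sub-edge, giving (1); and by the chart-assignment constraint the corresponding annulus in $C$ is mapped by $f$ entirely into $\fU_{\alpha,\eta}$, giving (2).

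The main obstacle is the refinement step producing the factorization $\tau\circ f = g\circ \tau_\mathfrak{C}$, that is, finding a semi-stable model $\mathfrak{C}$ of $C$ such that every open-disc component of $C\setminus S_\mathfrak{C}$ and every annular tube around an edge of $S_\mathfrak{C}$ maps under $f$ into a single standard chart of $\fX$. The strategy is to start with any semi-stable model, pull back the finite affine cover of $\fX$, and then enlarge $S_\mathfrak{C}$ by admissible blow-ups along the finitely many type-$2$ points where the chart changes or where $\tau\circ f$ fails to be affine; one uses here the stability of strict semi-stability under admissible blow-ups, the compactness of $C$, and the finiteness of the affine cover of $\fX$. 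Once the factorization is secured, the remaining verification is purely combinatorial.
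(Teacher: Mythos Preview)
Your approach is sound and reaches the same conclusion, but it is organized around different machinery than the paper's proof. The paper does not invoke semi-stable reduction or formal models of $C$ at all; instead it works entirely inside Ducros' analytic structure theory of Berkovich curves. Concretely, the paper covers $\fX$ by charts $\fU_1,\dots,\fU_m$, sets $C_i=f^{-1}(\fU_{i,\eta})$, and applies Ducros' result that the functions $|T_j^{(i)}\circ\phi_i\circ f|$ are locally constant off a finite graph $\Gamma_i\subset C_i$ and piecewise linear on $\Gamma_i$. It then passes to the convex hull $\overline\Gamma$ of these graphs together with an analytic skeleton of $C$, uses Ducros' retraction $r\colon C\to\overline\Gamma$, and subdivides $C\trop$ by the images of the knot points of $\overline\Gamma$ and of the $\overline\Gamma_i$; Ducros' description of $r^{-1}$ of a segment avoiding knot points yields the annuli, and the containment $A\subset C_i$ comes from comparing the retractions on $C$ and on $C_i$.

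Your route replaces Ducros' graphs and knot points by the skeleton of a semi-stable formal model of $C$, refined so that $f$ is compatible with the chart cover of $\fX$. This is legitimate, and the endgame is the same combinatorial subdivision. Two remarks on what each approach buys. First, the paper's argument works directly over $k$ with no field extension, whereas yours may require one for semi-stable reduction; this is harmless here (the paper itself observes later that $S_\fX$ and $C\trop$ are insensitive to base change), but it is worth saying. Second, your ``main obstacle'' is genuinely the crux: the cleanest way to arrange that every disc and every annular tube of $C\setminus S_{\mathfrak C}$ lands in a single chart is to first extend $f$ to a morphism of formal models $\mathfrak C\to\fX$ via Raynaud/Bosch--L\"utkebohmert (after an admissible blow-up of $\mathfrak C$), and then use that $\pi_{\mathfrak C}^{-1}(q)$ maps into $\pi_\fX^{-1}(\tilde f(q))\subset\fU_{i,\eta}$ for any chart containing $\tilde f(q)$. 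Once that is in place, constancy of $\tau\circ f$ on discs follows because the $T_j$ are invertible on $\fU_{i,\eta}$ and invertible functions have constant absolute value on open discs; your factorization $\tau\circ f=g\circ\tau_{\mathfrak C}$ then drops out, and the rest is exactly as you say. Your phrasing ``fibers away from the vertices are either single points or sit inside open discs'' is a bit off---the fibers of $\tau_{\mathfrak C}$ over interior edge points are not single points---but this does not affect the argument, since what you actually use is that $\tau_{\mathfrak C}^{-1}$ of an open sub-edge is an open annulus.
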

\begin{proof}\footnote{Many thanks to Antoine Ducros for his help with this proof.}
Let us choose affine charts $\fU_1,\dots,\fU_m$ as in Definition \ref{def:strictly semi-stable formal scheme} such that $\bigcup_{i=1}^m \fU_i=\fX$. Assume that for each $i\in\{1,\dots,m\}$, the structural morphism $\fU_i\rightarrow \Spf k^\circ$ factorizes through an étale morphism
\[\phi_i\colon \fU_i\rightarrow\Spf\left(k^\circ\big\{T_{0}^{(i)},\dots,T^{(i)}_{d_i},S^{(i)\pm}_{d_i+1},\dots,S^{(i)\pm}_{n_i}\big\}/(T_{0}^{(i)}\cdots T^{(i)}_{d_i}-a_i)\right)\]
for some $0\leq d_i\leq n_i$, $a_i\in k^{\circ\circ}\setminus 0$. Let $C_i = f^{-1}(\fU_{i,\eta})$, $t_{ij} = |T^{(i)}_j\circ\phi_{i,\eta}\circ f_{|C_i}|$, for $i=1,\dots,m$, $j=0,\dots,d_i$, where $|\cdot|$ denotes the absolute value. We have the following fact concerning the variation of holomorphic functions.
\begin{lem}[\cite{Ducros_Structure_2012}(4.4.35)]\label{lem:analytic admissible graph}
For each $i=1,\dots, m$, there exists a finite graph $\Gamma_i\subset C_i$ such that the functions $t_{ij}$ are locally constant over $C_i\setminus \Gamma_i$, and piecewise linear over $\Gamma_i$.
\end{lem}
By extending $\Gamma_i$, we can assume that it contains an analytic skeleton of $C_i$ (cf.\ \cite{Ducros_Structure_2012}(5.1.8)).
Let $\Gamma$ be the union of the analytic skeleton of $C$ and $\bigcup \Gamma_i$. Let $\overline{\Gamma}$ be the convex hull of $\Gamma$, and let $\overline\Gamma_i=\overline{\Gamma}\cap C_i$. By \cite{Ducros_Structure_2012}(5.1), we have a strong deformation retraction $r\colon C\rightarrow\overline\Gamma$, and $r_{|C_i}$ gives a retraction of $C_i$ onto $\overline\Gamma_i$.
Let $K_i$ be the set of knot points\footnote{The notion of knot points is defined in \cite{Ducros_Structure_2012}(5.1.12) for an analytically admissible and locally finite subgraph inside a generically quasi-smooth $k$-analytic curve.} for the subgraph $\overline\Gamma_i\subset C_i$ for $i=1,\dots,m$, and let $K$ be the set of knot points for the subgraph $\overline\Gamma\subset C$.
Let $P_0=(\tau\circ f)\big(K\cup\bigcup_{i=1}^m K_i\big)$, and let $P_1$ be the union of the points $p\in C\trop$ such that $((\tau\circ f)_{|\overline\Gamma})^{-1}(p)$ contains an infinite number of points. The set of points $P_0$ is finite by \cite{Ducros_Structure_2012}(5.1.12.2). The set of points $P_1$ is also finite by Lemma \ref{lem:analytic admissible graph}. Therefore, the union $P\coloneqq P_1\cup P_2$ is a finite set. Now it suffices to make the subdivision of the edges of our tropical curve $C\trop$ by adding the points in $P$ as new vertices. Indeed, for each edge $e$ of the subdivided tropical curve $\overline{C\trop}$, $\overline\Gamma\cap (\tau\circ f)^{-1}(e^\circ)$ is a finite disjoint union of open segments in $\overline\Gamma$. Any such segment $s$ is by construction contained in a certain $\overline\Gamma_i$. By \cite{Ducros_Structure_2012}(5.1.12.3), $r^{-1}(s)$ is isomorphic to an open annulus, which we denote by $A$. By Lemma \ref{lem:analytic admissible graph}, we have $f_{|\overline\Gamma}\circ r_{|A} = \tau\circ f_{|A}$. So we have proved the first assertion of our proposition. Moreover, $s\subset\overline\Gamma_i$ implies that $A_i\coloneqq(r_{|C_i})^{-1}(s)$ is an open annulus inside $C_i$. We have an inclusion of two open annuli $A_i\subset A$, and both of them retract to the same segment $s$. Therefore we have $A_i=A$, and $f(A)=f(A_i)\subset\fU_{i,\eta}$. So we have proved the second assertion as well.
\end{proof}

\begin{rem}
It is pointed out by the referee that Proposition \ref{prop:annuli} is the analog of \cite[Proposition 6.4(2)]{Baker_Nonarchimedean_2011} in our global setting.
\end{rem}

From now on, we replace our tropical curve $C\trop$ by the subdivided curve $\overline{C\trop}$ produced by Proposition \ref{prop:annuli}.

We explain in the following how to equip every edge $e$ of $C\trop$ a \emph{tropical weight} $\widetilde w_e\in\Z^{\IX}$ with respect to a choice of orientation of the edge $e$.
An orientation of an edge $e$ is a choice of a direction parallel to $e$ among the two possible choices. We will see that the weight $\widetilde w_e$ is multiplied by $-1$ if we reverse the orientation of $e$.

Now fix an edge $e$ of our tropical curve $C\trop$ and an orientation of $e$.
Assume that the interior $e^\circ$ of the edge $e$ is contained in the relative interior of a $d_e$-dimensional face $\Delta^{I_e}$ for $I_e\subset\IX$.
Let $A$ be a connected component of $(\tau\circ f)^{-1}(e^\circ)$.
Fix $i\in\IX$ and denote by $p_i\colon \R^{\IX}\rightarrow\R$ the projection to the $i^\text{th}$ coordinate.
Let $r_i$ be the composition of $p_i\circ\tau\circ f_{|A}$ using the embedding $S_\fX\subset\R^{\IX}$.
By Proposition \ref{prop:annuli}(2) and the explicit description of the retraction map in Lemma \ref{lem@retraction}, the map $r_i$ equals $\val(f_i)$ for some invertible function $f_i$ on the open annulus $A$.
Choose a coordinate $z$ on $A$ such that $A$ is given by $c_1<|z|<c_2$ for two positive real numbers $c_1<c_2$ and that the image $(\tau\circ f)(z)$ moves along the orientation of the edge $e$ as $\val(z)$ increases.
Write
\[f_i=\sum_{m\in\Z} f_{i,m} z^m\]
with $f_{i,m}\in k$.
By \cite{Berkovich_Etale_1993} Lemma 6.2.2, there exists $m_i\in\Z$ with $|f_{i,m_i}| r^{m_i} > |f_{i,m}| r^m$ for all $m\neq m_i$, $c_1<r<c_2$. Therefore, for $c_1<|z|<c_2$, we have
\begin{equation}\label{eq@tropical_weight}
\val(f_i(z))=\val(f_{i,m_i})+m_i\cdot \val(z).
\end{equation}

We define the $i^\text{th}$ component $\widetilde w^i_A$ of the weight $\widetilde w_A$ to be $m_i$ for every $i\in\IX$.

\begin{lem} \label{lem:direction_of_tropical_weight}
The weight $\widetilde w_A\in\Z^{\IX}$ defined as above does not depend on the choice of the invertible function $f_i$ or the coordiante $z$. It is multiplied by $-1$ if we reverse the orientation of the edge $e$. It is parallel to the direction of the edge $e$ sitting inside $\R^{\IX}$.
In particular, it is an element of $\Ker\big(\Z^{I_e}\xrightarrow{\Sigma}\Z\big)\subset\Z^{I_\fX}$, where $\Sigma\colon\Z^{I_e}\to\Z$ sends $(x^1,\dots,x^q)\in\Z^{I_e}$ to $x^1+\dots+x^q\in\Z$.
\end{lem}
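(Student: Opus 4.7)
The strategy is to interpret the integer $m_i$ of \eqref{eq@tropical_weight} as the slope of the piecewise-linear function $\val(f_i)$ in $\val(z)$ on $(\log c_1,\log c_2)$, and to read off each of the three assertions from this slope picture. All subtleties are packaged into the uniqueness of the dominant term \cite[Lemma 6.2.2]{Berkovich_Etale_1993}, which I would apply repeatedly to invertible analytic functions on the open annulus $A$.

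For the independence of the choice of $f_i$: if $f_i$ and $f_i'$ both compute the same valuation $r_i$ on $A$, then their ratio $u\coloneqq f_i/f_i'$ is invertible on $A$ with $\val(u)\equiv 0$, so the slope formula applied to $u$ forces its slope to vanish, and the slopes of $f_i$ and $f_i'$ agree. For the dependence on the coordinate $z$: any other coordinate $z'$ presenting $A$ as an open annulus is itself an invertible function on $A$, so $\val(z')=\alpha+\beta\val(z)$ with $\beta\in\Z$; symmetrically $\val(z)$ is affine in $\val(z')$, which forces $\beta=\pm 1$. Substituting into \eqref{eq@tropical_weight} yields slope $\beta m_i$ with respect to $z'$, and $\beta$ is $+1$ exactly when the two coordinates induce the same direction of motion along $e$. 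This gives assertions (1) and (2) simultaneously.

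For the remaining assertions, assembling \eqref{eq@tropical_weight} over all $i\in\IX$ shows that
\[(\tau\circ f)(z)=\big(\val(f_{i,m_i})+m_i\val(z)\big)_{i\in\IX}\]
traces an affine segment in $\R^\IX$ with velocity vector $\widetilde w_A$, which is therefore parallel to $e$. The kernel statement is then read off from Lemma \ref{lem:skeleton}: the $i$-th coordinate vanishes identically on $\Delta^{I_e}$ whenever $i\notin I_e$, forcing $m_i=0$ for such $i$; and $\sum_{i\in I_e}r_i$ is constant on $\Delta^{I_e}$ (equal to $\val(a_{I_e'})$ for any maximal face $I_e'\supset I_e$), forcing $\Sigma(\widetilde w_A)=\sum_{i\in I_e}m_i=0$. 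A more concrete rederivation pulls back the defining relation $T_0\cdots T_d=a$ through the étale chart of Proposition \ref{prop:annuli}(2), giving $\sum\val(f_i)\equiv\val(a)$ on $A$ and hence the same vanishing of the sum of slopes.

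The one delicate point I anticipate is the identification $\beta=\pm 1$ in the coordinate-change step, which genuinely uses the invariance of the modulus of the annulus via \cite[Lemma 6.2.2]{Berkovich_Etale_1993}; everything else is a direct consequence of the slope formula.
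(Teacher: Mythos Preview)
Your argument is correct and is precisely an explicit unpacking of the paper's one-line proof, which reads in full: ``All the assertions follow from Eq.~\eqref{eq@tropical_weight}.'' You have simply spelled out how each assertion is read off from the affine relation $\val(f_i(z))=\val(f_{i,m_i})+m_i\val(z)$, so the approach is the same.
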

\begin{proof}
All the assertions follow from Eq.\ \eqref{eq@tropical_weight}.
\end{proof}

\begin{defin} \label{def:tropical_weight}
Let $\widetilde w_e$ be the sum of $\widetilde w_A$ over every connected component $A$ of $(\tau\circ f)^{-1}(e)$.
The element $\widetilde w_e\in\Ker\big(\Z^{I_e}\xrightarrow{\Sigma}\Z\big)\subset\Z^{I_\fX}$ is called the \emph{tropical weight} associated to the edge $e$ with the chosen orientation.
\end{defin}

\begin{rem}
Definition \ref{def:tropical_weight} is a globalization of the classical notion of tropical weights.
We refer to \cite[§6]{Baker_Nonarchimedean_2011} for similar considerations in the toric case.
\end{rem}

Let us explain the homological nature of the tropical weights.
Let $A$ be an open annulus as above and let $\phi\colon \fU\rightarrow\fS(n,d,a)$ be as in Proposition \ref{prop:annuli}(2).
We assume for simplicity that the divisors $D_0,\dots,D_d$ restricted to $\fU$ are given by the equations $T_0\circ\phi=0,\dots,T_d\circ\phi=0$ respectively, where $T_0,\dots,T_d$ are coordinates on $\fS(n,d,a)$ (see Eq.\ \eqref{eq:standard formal scheme}).
We further assume that $I_e = \{0,\dots,d_e\}$.
Let $\fB=\fS(n,d,a)$, and let $\pi_\fU\colon \fU_\eta\rightarrow\fU_s$, $\pi_\fB\colon \fB_\eta\rightarrow\fB_s$ be the reduction maps (cf.\ \cite[§1]{Berkovich_Vanishing_1994}).

\begin{lem}\label{lem:constant after reduction}
There exists a closed point $p\in\fU_s$ such that $f(A)\subset\pi_\fU^{-1}(p)$.
\end{lem}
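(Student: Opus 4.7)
The plan is to work with the étale chart $\phi\colon\fU\to\fB=\fS(n,d,a)$ provided by Proposition~\ref{prop:annuli}(2), and to analyse each coordinate of $\phi_\eta\circ f_{|A}$ as a Laurent series in a uniformizer of the annulus $A$. The idea is that constancy of the valuation of each coordinate on $A$ forces constancy of its reduction, which in turn pins down the image of $A$ under $\pi_\fU$.

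Fix a coordinate $z$ on $A$ so that $A=\{c_1<|z|<c_2\}$, and set $I_e=\{0,\dots,d_e\}$. By Lemma~\ref{lem@retraction} and the assumption that $(\tau\circ f)(A)=e^\circ$ sits in the relative interior of $\Delta^{I_e}$, the analytic function $g_i\coloneqq T_i\circ\phi\circ f_{|A}$ satisfies $|g_i(x)|<1$ on $A$ for $i\in I_e$, while $g_j\coloneqq T_j\circ\phi\circ f_{|A}$ for $d_e<j\le d$ and $h_j\coloneqq S_j\circ\phi\circ f_{|A}$ for $j>d$ are units on $A$ with constant absolute value~$1$. Expanding $g_j=\sum_{m\in\Z}g_{j,m}z^m$ and invoking \cite[Lemma~6.2.2]{Berkovich_Etale_1993} exactly as in the derivation of \eqref{eq@tropical_weight}, there is a unique exponent dominating the supremum norm for every $r\in(c_1,c_2)$. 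The constancy $|g_j|\equiv1$ then forces this exponent to be $0$ with $|g_{j,0}|=1$, and yields $|g_{j,m}|r^m<1$ for every $m\neq 0$ and every $r\in(c_1,c_2)$. Hence $g_j(x)\equiv g_{j,0}$ modulo the maximal ideal of $\mathcal H(x)^\circ$ for every $x\in A$, and analogously for each $h_j$.

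Combining these observations, $\pi_\fB\circ\phi_\eta\circ f_{|A}$ is the constant map with value the closed point $\overline q\in\fB_s$ cut out by $T_i=0$ for $i\in I_e$, $T_j=\widetilde{g_{j,0}}$ for $d_e<j\le d$, and $S_j=\widetilde{h_{j,0}}$ for $j>d$. Functoriality of Berkovich's reduction map under the étale morphism $\phi$ gives $\pi_\fB\circ\phi_\eta=\phi_s\circ\pi_\fU$, so $\pi_\fU(f(A))$ lies in the finite set $\phi_s^{-1}(\overline q)$ of closed points of $\fU_s$. Since $\phi$ is étale, the tubes $\{\pi_\fU^{-1}(p)\}_{p\in\phi_s^{-1}(\overline q)}$ are pairwise disjoint open subsets of $\fU_\eta$ whose union equals $\phi_\eta^{-1}(\pi_\fB^{-1}(\overline q))$; connectedness of the open annulus $A$ therefore confines $f(A)$ to a single tube $\pi_\fU^{-1}(p)$, yielding the desired closed point~$p$. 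The main technical input is this last disjoint decomposition of tubes along the étale chart $\phi$, which is standard in Berkovich's formal geometry.
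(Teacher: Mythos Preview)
Your proof is correct and follows essentially the same route as the paper's: expand each coordinate of $\phi_\eta\circ f_{|A}$ as a Laurent series in $z$, use the dominant-term lemma to identify the reduction of each coordinate, conclude that $\pi_\fB\circ\phi_\eta\circ f$ is constant, and then use étaleness of $\phi$ together with connectedness of $A$ to pin down a single closed point $p\in\fU_s$. The only cosmetic difference is that the paper organizes the case distinction by whether the tropical weight component $\widetilde w_A^i$ vanishes, whereas you split according to whether $|g_i|<1$ or $|g_i|\equiv 1$; these amount to the same computation.
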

\begin{proof}
Consider the composition $\phi_\eta\circ f_{|A} \colon A\rightarrow \fB_\eta$. In terms of affinoid algebras, it is given by $n$ power series:
\begin{align*}
T_i&\mapsto \sum_{m\in\Z} f_{i,m} z^m \qquad\text{ for }i=0,\dots,d,\\
S_i&\mapsto \sum_{m\in\Z} f_{i,m} z^m \qquad\text{ for }i=d+1,\dots,n.
\end{align*}
From the definition of the weight $\widetilde w_A$, for any $i=0,\dots,n$, $c_1<r<c_2$, $m\neq \widetilde w_A^i$, we have
\[\left| f_{i,\widetilde w_A^i} \right| r^{\widetilde w_A^i} > |f_{i,m}| r^m .\]
Therefore, if $\widetilde w_A^i=0$, then the $i^\text{th}$-coordinate of all the points in $(\pi_\fB\circ\phi_\eta\circ f)(A)$ is $\overline f_{i,0}$, where $\overline f_{i,0}$ denotes the image of $f_{i,0}$ in the residue field $\widetilde k$. If $\widetilde w_A^i\neq 0$, then the $i^\text{th}$-coordinate of all the points in $(\pi_\fB\circ\phi_\eta\circ f)(A)$ is zero. This shows that the image $(\pi_\fB\circ\phi_\eta\circ f)(A)$ is a $\widetilde k$-rational point in $\fB_s$, which we denote by $p_\fB$. By the commutativity
\[\pi_\fB\circ\phi_\eta = \phi_s\circ\pi_\fU \colon \, \fU_\eta \rightarrow \fB_s , \]
we have \[(\pi_\fU\circ f)(A)\subset \phi_s^{-1}(p_\fB). \]
Since $\phi_s$ is étale, $\phi_s^{-1}(p_\fB)$ is discrete. Then by the connectedness of the annulus $A$, there exists a point $p\in\phi_s^{-1}(p_\fB)$ such that $(\pi_\fU\circ f)(A)=p$.
\end{proof}

Since $\phi\colon \fU\rightarrow\fB$ is étale, and $\phi_s$ induces an isomorphism between the point $p$ and the point $p_\fB=\phi_s(p)$, $\phi_\eta$ induces an isomorphism between $\pi^{-1}_\fU(p)$ and $\pi_\fB^{-1}(p_\fB)$ (\cite{Berkovich_Smooth_1999} Lemma 4.4). Now $\pi^{-1}_\fB (p_\fB)$ is very easy to describe.
It is isomorphic to the generic fiber of the special formal scheme
\begin{equation}\label{eq@special_formal_scheme}
\Spf \Big(k^\circ [[T_0,\dots,T_{d_e}, S_{d_e+1},\dots,S_n]]/(T_0\cdots T_{d_e}-a')\Big).
\end{equation}

For $i\in I_e$, let $A_i$ be the generic fiber of the special formal scheme
\[\Spf\big(k^\circ[[T_i,T']]/(T_i\cdot T'-a')\big),\]
and let $c_i$ denote the morphism of $k$-analytic spaces
\[\pi_\fB^{-1}(p_\fB)\rightarrow A_i\]
induced by the homomorphism of algebras
\[k^\circ[[T_i,T']]/(T_i\cdot T'-a) \longrightarrow k[[T_0,\dots,T_{d_e},S_{d_e+1},\dots,S_n]]/(T_0\cdots T_{d_e}-a') ,\]
which takes
\begin{align*}
T_i &\longmapsto T_i\\
T' &\longmapsto T_0\cdots \widehat{T_i}\cdots T_{d_e}.
\end{align*}

\begin{lem}\label{lem:weights_as_winding_numbers}
Let $g_i=c_i\circ\phi_\eta\circ f_{|A}\colon A\rightarrow A_i$, and let $g_i^* \colon H^1\et\big(\overline{A_i},\Q_\ell\big)\rightarrow H^1\et\big(\overline A,\Q_\ell\big)$ be the induced homomorphism of étale cohomology groups.
Then $\widetilde w^i_A=g_i^*(1)\in\Q_\ell$ for all $i\in I_e$.
\end{lem}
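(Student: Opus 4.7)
The plan is to reduce the statement to a Kummer-theoretic computation on the open annulus $A$. Both $\overline{A_i}$ and $\overline A$ are one-dimensional rigid annuli, so by a standard computation (via the Kummer exact sequence together with $H^0_{\text{\'et}}(\overline A,\Gm)=\mathcal O(\overline A)^\times$ and the vanishing of higher Picard groups of annuli), the groups $H^1_{\text{\'et}}(\overline{A_i},\mu_{\ell^\nu})$ and $H^1_{\text{\'et}}(\overline A,\mu_{\ell^\nu})$ are each cyclic of order $\ell^\nu$, generated respectively by the Kummer classes $[T_i]$ and $[z]$ of the coordinate functions. After passing to the inverse limit and tensoring with $\Q_\ell$, the identification $H^1_{\text{\'et}}(\overline{A_i},\Q_\ell)\simeq\Q_\ell$ used in the statement is the one sending $[T_i]\mapsto 1$, and similarly for $\overline A$. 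Therefore the computation of $g_i^*(1)$ reduces to computing the Kummer class on $\overline A$ of the pulled-back coordinate $g_i^*T_i=f_i$.

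The key ingredient is the following explicit description of Kummer classes of invertible analytic functions on an annulus: for any invertible $h\in\mathcal O(A)^\times$ with Laurent expansion $h=\sum_{m\in\Z}h_m z^m$, Berkovich's Lemma 6.2.2 in \cite{Berkovich_Etale_1993} produces a unique $m_0\in\Z$ such that $|h_{m_0}|r^{m_0}>|h_m|r^m$ for all $m\neq m_0$ and all $c_1<r<c_2$. Writing
\[
h = h_{m_0}\,z^{m_0}\,u, \qquad u=1+\sum_{m\neq m_0}\frac{h_m}{h_{m_0}}z^{m-m_0},
\]
one has $|u-1|<1$ uniformly on $A$, so $u$ admits an $\ell^\nu$-th root via the usual binomial series and thus $[u]=0$ in $H^1_{\text{\'et}}(\overline A,\mu_{\ell^\nu})$. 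Moreover $h_{m_0}\in k^\times$ has an $\ell^\nu$-th root in $\widehat{k^s}$, so its Kummer class also vanishes after base change to $\widehat{k^s}$. Consequently $[h]=m_0\,[z]$ in $H^1_{\text{\'et}}(\overline A,\mu_{\ell^\nu})$.

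I would then apply this computation to $h=f_i=g_i^*T_i$. By the very definition of the tropical weight in equation \eqref{eq@tropical_weight}, the integer $m_0$ attached to $f_i$ is precisely $\widetilde w^i_A$. Therefore
\[
g_i^*[T_i] = [f_i] = \widetilde w^i_A \cdot [z]
\]
in $H^1_{\text{\'et}}(\overline A,\mu_{\ell^\nu})$ for every $\nu$. Passing to the limit and tensoring with $\Q_\ell$, and using the chosen generators as identifications with $\Q_\ell$, this yields $g_i^*(1)=\widetilde w^i_A$, as required.

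The main technical point, and the only one that is not entirely formal, is the vanishing of $[u]$ and of $[h_{m_0}]$ after base change. The first is a direct convergence argument for the $\ell^\nu$-th root series on the locus $|u-1|<1$; the second uses that Kummer classes of elements of $k^\times$ die on $\overline A$ since $\widehat{k^s}$ contains all $\ell^\nu$-th roots. Once these two vanishings are in place, the identity $g_i^*(1)=\widetilde w^i_A$ follows from the definition of the leading exponent.
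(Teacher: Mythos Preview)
Your proof is correct and follows essentially the same approach as the paper: the paper's one-line proof simply cites Lemma~6.2.5 of \cite{Berkovich_Etale_1993}, and your Kummer-theoretic argument (factoring $f_i = h_{m_0} z^{m_0} u$ with $|u-1|<1$ via Lemma~6.2.2 and observing that $[u]$ and $[h_{m_0}]$ vanish after base change) is precisely the content of that lemma. In other words, you have unpacked what the paper leaves as a black-box citation.
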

\begin{proof}
It follows from \cite{Berkovich_Etale_1993} Lemma 6.2.5 that the winding numbers in terms of étale cohomology equals exactly the numbers $m_i$ in Eq.\ \eqref{eq@tropical_weight}.
\end{proof}

Our next step is to relate the tropical weights to vanishing cycles. Since our tropical weights are defined using étale cohomology with $\Q_\ell$-coefficients, we shall take the inverse limit over $\nu$ for all our reasonings in Sections \ref{sec:vanishing cycles} and \ref{sec:tube}. By abuse of notation, we will just replace $\Lambda$ by $\Q_\ell$, although it should be understood that the inverse limit is taken in the last step rather than from the beginning.

Let $v$, $I_v$, $V^{s_0\dots s_{d_v}}_{I_v}$, $j$ be as in Section \ref{sec:tube}. Let $A$, $\phi\colon \fU\rightarrow\fB$, $p\in\fU_s$ be as before.
We assume that $f(A)\subset\tau^{-1}(V_{I_v}^{s_0\dots s_{d_v}})$.
By Theorem \ref{thm:tube}, we have 
\[H^1\et\big(\tau^{-1}(V^{s_0\dots s_{d_v}}_{I_v})\times\widehat{k^s},\Q_\ell\big)\simeq R^1\Gamma\big(j^* R\Psi\Q_{\ell,\fX_\eta}\big) .\]

\begin{lem}We have an isomorphism
\begin{equation}\label{eq:calculation 2 of pullback to p}
R^1\Gamma\left(j_p^*R\Psi\QUe\right)\xrightarrow{\sim} H^1\et\left(\left(\tau^{-1}(V_{I_v}^{s_0\dots s_{d_v}})\cap \pi_\fU^{-1}(p)\right)\times\widehat{k^s},\Q_\ell\right) .
\end{equation}
\end{lem}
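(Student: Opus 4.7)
The plan is to combine Theorem \ref{thm:tube}, applied locally on $\fU$, with the standard computation of stalks of nearby cycles at closed points of the special fiber. First I would observe that the proof of Theorem \ref{thm:tube} is local on $\fX$, so the same conclusion applies verbatim after replacing $\fX$ by its open formal subscheme $\fU$. Writing $\iota$ for the open inclusion $\tau^{-1}(V_{I_v}^{s_0\dots s_{d_v}})\cap \fU_\eta\hookrightarrow\fU_\eta$ and, slightly abusing notation, $j$ for the closed immersion $\overline D_{I_v}\cap\overline{\fU_s}\hookrightarrow\overline{\fU_s}$, the local version of Theorem \ref{thm:tube} gives a quasi-isomorphism
\[
j_*j^*R\Psi\QUe\xrightarrow{\sim} R\Psi\bigl(R\iota_*\iota^*\QUe\bigr).
\]
Since $p$ lies in $\overline D_{I_v}$, the closed immersion $j_p\colon\{p\}\hookrightarrow\overline{\fU_s}$ factors through $j$, so $j_p^*j_*j^*=j_p^*$; pulling the displayed quasi-isomorphism back along $j_p$ therefore yields $j_p^*R\Psi\QUe\xrightarrow{\sim} j_p^*R\Psi(R\iota_*\iota^*\QUe)$.

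Next I would invoke Berkovich's description of the stalks of $R\Psi$ at closed points of the special fiber \cite{Berkovich_Vanishing_1994}: for any abelian étale sheaf $\mathcal F$ on $\fU_\eta$ there is a canonical isomorphism $(R^q\Psi\mathcal F)_p\simeq H^q\et(\pi_\fU^{-1}(p)\times\widehat{k^s},\mathcal F)$. Applying this with $\mathcal F=R\iota_*\iota^*\QUe$ and taking $R^1\Gamma$ reduces the lemma to producing a canonical isomorphism
\[
H^1\et\bigl(\pi_\fU^{-1}(p)\times\widehat{k^s},R\iota_*\iota^*\Q_\ell\bigr)\simeq H^1\et\bigl((\pi_\fU^{-1}(p)\cap\tau^{-1}(V_{I_v}^{s_0\dots s_{d_v}}))\times\widehat{k^s},\Q_\ell\bigr).
\]
This is a purely sheaf-theoretic identity: both $\pi_\fU^{-1}(p)$ and $\tau^{-1}(V_{I_v}^{s_0\dots s_{d_v}})$ are open in $\fU_\eta$, and flat base change (automatic for inclusions of open subspaces) identifies the restriction of $R\iota_*\iota^*\Q_\ell$ to $\pi_\fU^{-1}(p)$ with $R\iota'_*\Q_\ell$, where $\iota'$ is the open inclusion of the intersection into $\pi_\fU^{-1}(p)$. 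Taking global sections of this identification produces the cohomology of the intersection, completing the argument.

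The main obstacle lies in justifying the two compatibilities used above. The first—that Theorem \ref{thm:tube} passes to the local setting on $\fU$—is essentially formal, since its proof was already reduced to the standard chart $\fS(n,d,a)$. The second—that Berkovich's stalk formula holds for the non-constant complex $R\iota_*\iota^*\Q_\ell$ and not merely for constant coefficients—is routine but warrants a careful check: it boils down to the compatibility of $R\Psi$ with filtered colimits and with pushforward along the open immersion $\iota$, both of which are part of Berkovich's foundational setup for nearby cycles on formal schemes.
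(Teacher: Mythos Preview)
Your argument is correct, but it follows a genuinely different route from the paper. The paper argues in the reverse order: it first invokes Berkovich's comparison (from \cite{Berkovich_Vanishing_II_1996}, not \cite{Berkovich_Vanishing_1994}) \emph{only for constant coefficients} to obtain $R^1\Gamma(j_p^*R\Psi\QUe)\simeq H^1\et(\pi_\fU^{-1}(p)\times\widehat{k^s},\Q_\ell)$, and then shrinks the full tube $\pi_\fU^{-1}(p)$ down to $\pi_\fU^{-1}(p)\cap\tau^{-1}(V_{I_v}^{s_0\dots s_{d_v}})$ by rerunning the explicit deformation argument of Theorem~\ref{thm:tube} directly on the special formal scheme~\eqref{eq@special_formal_scheme}. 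Your version front-loads Theorem~\ref{thm:tube} on $\fU$, absorbing the shrinking into the coefficient complex, and then appeals to the stalk formula plus base change. The trade-off is clear: the paper stays with constant coefficients but must redo the deformation in the special-formal-scheme setting (hence its reliance on the explicit presentation of $\pi_\fU^{-1}(p)$), while your approach is cleaner and more functorial but must justify the stalk formula for the complex $R\iota_*\iota^*\Q_\ell$. One small correction: $\tau^{-1}(V_{I_v}^{s_0\dots s_{d_v}})$ is not open in $\fU_\eta$---it is a closed Weierstrass domain (the conditions $r_{i_j}\ge s_j$ translate to $|T_{i_j}|\le |a_j|$), so $\iota$ is not an open immersion. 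This does not damage your base-change step, since that step only needs the \emph{other} inclusion $\pi_\fU^{-1}(p)\hookrightarrow\fU_\eta$ to be open (which it is, by anti-continuity of the reduction map); but your closing remark about ``pushforward along the open immersion $\iota$'' should be rephrased accordingly.
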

\begin{proof}
By \cite{Berkovich_Vanishing_II_1996}, we have
\begin{equation} \label{eq:vanishing cycles of pullback to p}
R^1\Gamma\left(j_p^*R\Psi\QUe\right)\xrightarrow{\sim} H^1\et\big(\pi_\fU^{-1}(p)\times\widehat{k^s},\Q_\ell\big).
\end{equation}

Using the isomorphism $\pi^{-1}_\fU(p)\simeq\pi^{-1}_\fB(p_\fB)$ and the explicit description of $\pi^{-1}_\fB(p_\fB)$ in Eq.\ \eqref{eq@special_formal_scheme}, the same arguments as in the proof of Theorem \ref{thm:tube} give an isomorphism
\begin{equation}\label{eq:deform a bit}
H^1\et\left(\left(\tau^{-1}(V_{I_v}^{s_0\dots s_{d_v}})\cap \pi_\fU^{-1}(p)\right)\times\widehat{k^s},\Q_\ell\right)\xrightarrow{\sim}H^1\et\left(\pi^{-1}_\fU (p)\times\widehat{k^s},\Q_\ell\right) .
\end{equation}
Now our lemma follows from \eqref{eq:vanishing cycles of pullback to p} and \eqref{eq:deform a bit}.
\end{proof}

Now assume that $v$ is an endpoint of the edge $e$.
Let $J\coloneqq J_{I_v}$ (see Eq.\ \eqref{eq:J}).
Since $I_e\subset J$, by Lemma \ref{lem:direction_of_tropical_weight}, the weight $\widetilde w_A$ is an element in $\Ker\big(\Z^J\xrightarrow{\Sigma}\Z\big)$.
Using the natural inclusion $\Ker(\Z^J\xrightarrow{\Sigma}\Z)\longhookrightarrow\Ker(\Q_\ell^J\xrightarrow{\Sigma}\Q_\ell\big)$, the weight $\widetilde w_A$ induces a map 
\[\widetilde w_A^* \colon \Coker\big(\Q\xrightarrow{\Delta}\Q^J\big)\longrightarrow \Q_\ell .\]

\begin{prop}\label{prop:weights-cycles}
We have the following commutative diagram
\[\begin{tikzcd}[column sep=small]
R^1\Gamma\left(j^*R\Psi\QXe\right) \arrow{r}{\beta^*} \arrow{d}{\sim} &R^1\Gamma\left(j^*R\Phi\QXe\right) \arrow{r}{\sim} & \Coker\big(\Q_\ell\xrightarrow{\Delta}\Q_\ell^J\big)(-1)\arrow{d}{\widetilde w_A^*} \\
H^1\et\big(\tau^{-1}(V_{I_v}^{s_0\dots s_{d_v}})\times\widehat{k^s},\Q_\ell\big) \arrow{r}{f_{|A}^*} & H^1\et\big(\overline{A},\Q_\ell\big) \arrow{r}{\sim} &\Q_\ell(-1) .
\end{tikzcd}\]
\end{prop}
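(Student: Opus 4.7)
The plan is to factor the whole diagram through an intermediate column indexed by the local ``tube over the point $p$'', namely the cohomology of $\tau^{-1}(V^{s_0\dots s_{d_v}}_{I_v})\cap\pi_\fU^{-1}(p)$, where $p\in\fU_s$ is the reduction of $f(A)$ produced by Lemma \ref{lem:constant after reduction}. Since $f(A)\subset\pi_\fU^{-1}(p)$, the morphism $f_{|A}^*$ along the bottom row already factors through the cohomology of this smaller tube, so it suffices to check the corresponding smaller squares commute at $p$.

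Concretely, I would insert a middle row whose central term is $R^1\Gamma(j_p^*R\Psi\QUe)$, where $j_p\colon p\hookrightarrow\fU_s$ denotes the inclusion. By Eq.\ \eqref{eq:calculation 2 of pullback to p}, this term identifies with $H^1\et((\tau^{-1}(V^{s_0\dots s_{d_v}}_{I_v})\cap\pi_\fU^{-1}(p))\times\widehat{k^s},\Q_\ell)$. Using the étale isomorphism $\pi_\fU^{-1}(p)\simeq\pi_\fB^{-1}(p_\fB)$ together with the explicit presentation \eqref{eq@special_formal_scheme}, a direct computation in the spirit of Proposition \ref{prop:vanishing cycles} identifies the cohomology of this local tube with $\Coker(\Q_\ell\xrightarrow{\Delta}\Q_\ell^{I_e})(-1)$, the $i$-th standard generator corresponding to the class $d\log T_i$. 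Under this identification, the composition with $c_i$ picks out the $i$-th coordinate, while the restriction map from $R^1\Gamma(j^*R\Phi\QXe)\simeq\Coker(\Q_\ell\xrightarrow{\Delta}\Q_\ell^J)(-1)$ is the natural projection induced by the inclusion $I_e\subset J$. Commutativity of each resulting sub-square follows from the base-change functoriality of the specialization and vanishing cycles functors.

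By Lemma \ref{lem:weights_as_winding_numbers}, $g_i^*(1)=\widetilde w_A^i$ in $H^1\et(\overline A,\Q_\ell)\simeq\Q_\ell(-1)$. Combined with the identifications above, a class represented by $(x_i)_{i\in J}\in\Q_\ell^J$ maps along the bottom path to $\sum_{i\in I_e}\widetilde w_A^i\cdot x_i\in\Q_\ell(-1)$, which equals $\widetilde w_A^*((x_i)_{i\in J})$ since $\widetilde w_A^j=0$ for $j\in J\setminus I_e$. The main obstacle will be to verify that the cycle-class identification of Corollary \ref{cor:vanishing cycles} restricts compatibly to the local chart $\fU$ and further to the single point $p$; this reduces to tracking how the divisor classes behave under the morphisms $f\colon\tfB\to\fB$ and $\fU\to\fX$, which ultimately follows from functoriality of the exact triangle \eqref{eq:triangle of pair} under étale pullback.
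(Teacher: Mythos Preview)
Your plan is correct and follows essentially the same route as the paper: factor through the local tube at $p$ via $R^1\Gamma(j_p^*R\Psi\QUe)\simeq\Coker(\Q_\ell\xrightarrow{\Delta}\Q_\ell^{I_e})(-1)$, establish commutativity there using Lemma~\ref{lem:weights_as_winding_numbers}, and then pass to the global diagram by functoriality of the vanishing cycles functors. One minor point: the only input you need from Corollary~\ref{cor:vanishing cycles} is the identification $R^1\Gamma(j^*R\Phi\QXe)\simeq\Coker(\Q_\ell\xrightarrow{\Delta}\Q_\ell^J)(-1)$, not the cycle-class description of $\alpha^*$, so your stated ``main obstacle'' is lighter than you suggest and does not require tracking divisor classes under $f\colon\tfB\to\fB$.
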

\begin{proof}
In order to simplify notation, let us temporarily denote
\begin{align*}
&\widetilde V = \tau^{-1}(V^{s_0\dots s_{d_v}}_{I_v})\times\widehat{k^s}, \\
&\widetilde V_p = \left(\tau^{-1}(V_{I_v}^{s_0\dots s_{d_v}})\cap \pi_\fU^{-1}(p)\right)\times\widehat{k^s} ,\\
&\Q_\ell^{[I_e]} = \Coker\big(\Q_\ell\xrightarrow{\Delta}\Q_\ell^{I_e}) ,\\
&\Q_\ell^{[J]} = \Coker\big(\Q_\ell\xrightarrow{\Delta}\Q_\ell^{J}) .
\end{align*}

Let $j_p$ denote the inclusion $\overline{\{p\}}\hookrightarrow\fU_{\overline s}$. By the description of the point $p$ in the proof of Lemma \ref{lem:constant after reduction}, we have
\begin{equation}\label{eq:calculation 1 of pullback to p}
R^1\Gamma\big(j_p^*R\Psi\QUe\big) \xrightarrow{\sim} R^1\Gamma\big(j_p^*R\Phi\QUe\big)\xrightarrow{\sim} \Coker\big(\Q_\ell\xrightarrow{\Delta}\Q_\ell^{I_e})(-1) .
\end{equation}

Combining \eqref{eq:calculation 1 of pullback to p} and \eqref{eq:calculation 2 of pullback to p}, we have isomorphisms
\[H^1\et\big(\widetilde V_p,\Q_\ell\big)\xleftarrow{\sim}R^1\Gamma\big(j_p^*R\Psi\QUe\big) \xrightarrow{\sim} R^1\Gamma\big(j_p^*R\Phi\QUe\big)\xrightarrow{\sim} \Q_\ell^{[I_e]}(-1) .\]

Then the following commutative diagram follows from the cohomological interpretations of tropical weights (Lemma \ref{lem:weights_as_winding_numbers}).
\begin{equation}
\begin{tikzcd}
R^1\Gamma\big(j_p^*R\Psi\QUe\big) \arrow{r}{\sim} \arrow{d}{\sim} &R^1\Gamma\big(j_p^*R\Phi\QUe\big) \arrow{r}{\sim} & \Q_\ell^{[I_e]}(-1)\arrow{d}{\widetilde w_A^*} \\
H^1\et\big(\widetilde V_p,\Q_\ell\big) \arrow{r}{f_{|A}^*} & H^1\et\big(\overline A,\Q_\ell\big) \arrow{r}{\sim} &\Q_\ell(-1) .
\end{tikzcd}
\end{equation}

We conclude by the functoriality of the formation of vanishing cycles, i.e. the following commutative diagram:
\[\begin{tikzcd}[column sep=small]
H^1\et\big(\widetilde V,\Q_\ell\big) \arrow{d} & R^1\Gamma\left(j^*R\Psi\QXe\right) \arrow{l}[swap]{\sim}\arrow{r}{\beta^*} \arrow{d}{} &R^1\Gamma\big(j^*R\Phi\QXe\big) \arrow{r}{\sim}\arrow{d} & \Q_\ell^{[J]}(-1)\arrow{d}{} \\
H^1\et\big(\widetilde V_p,\Q_\ell\big)  & R^1\Gamma\left(j_p^*R\Psi\QUe\right) \arrow{l}[swap]{\sim} \arrow{r}{\sim}  &R^1\Gamma\big(j_p^*R\Phi\QUe\big) \arrow{r}{\sim} & \Q_\ell^{[I_e]}(-1) .
\end{tikzcd}\]
\end{proof}

\begin{rem}
Intuitively, Proposition \ref{prop:weights-cycles} says that the image of the annulus $A$ under the morphism $f$ can only go around vanishing cycles in $\tau^{-1}(V^{s_0\dots s_{d_v}}_{I_v})$ rather than arbitrary homology cycles.
\end{rem}

\section{Balancing condition in terms of étale cohomology} \label{sec:cohomological balancing}

We use the settings in Section \ref{sec:intro}.
The aim of this section is to prove the following theorem.

\begin{thm}\label{thm:cohomological balancing}
Let $\sigma_v$ denote the sum of weights around the vertex $v$ as in (\ref{eq:sum of weights}).
Let $J\coloneqq J_{I_v}$ as in Section \ref{sec:weights}.
Then $\sigma_v$ lies in the image of the following map:
\begin{align*}
\alpha_\ell \colon H^{2(\dim D_{I_v}-1)}\et\left(D_{I_v}\times_{\widetilde k} \Spec \widetilde{k^s},\Q_\ell\right)(\dim D_{I_v}-1)&\longrightarrow \Ker\big(\Q_\ell^J\xrightarrow{\Sigma}\Q_\ell\big) ,\\
L &\longmapsto \big(\, L\cdot L_j,\ j\in J \,\big),
\end{align*}
where $\alpha_\ell$ is dual to the map $\alpha^*$ in Corollary \ref{cor:vanishing cycles} for $\mathbb Q_\ell$ coefficients.
\end{thm}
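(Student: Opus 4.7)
The plan is to dualize everything. By the perfect pairing between $\Coker\big(\Q_\ell\xrightarrow{\Delta}\Q_\ell^J\big)(-1)$ and $\Ker\big(\Q_\ell^J\xrightarrow{\Sigma}\Q_\ell\big)$, combined with Poincar\'e duality on the smooth projective stratum $\overline D_{I_v}$, the image of $\alpha_\ell$ is precisely the annihilator of $\Ker(\alpha^*)$ inside $\Ker\big(\Q_\ell^J\xrightarrow{\Sigma}\Q_\ell\big)$. By the long exact sequence \eqref{eq:long exact sequence}, $\Ker(\alpha^*)=\Image(\beta^*)$. Hence it suffices to show that
\[\bigl\langle\sigma_v,\beta^*(\tilde c)\bigr\rangle=0\]
for every $\tilde c\in R^1\Gamma\bigl(j^*R\Psi\QXe\bigr)$.

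Unwinding $\sigma_v=\sum_A\widetilde w_A$, where $A$ ranges over connected components of $(\tau\circ f)^{-1}(e^\circ)$ for edges $e\ni v$, and applying Proposition \ref{prop:weights-cycles} to each $A$, the term $\widetilde w_A^*\bigl(\beta^*(\tilde c)\bigr)$ equals the image of $\tilde c$, now regarded via the isomorphism of Theorem \ref{thm:tube} as a class in $H^1\et\bigl(\tau^{-1}(V^{s_0\dots s_{d_v}}_{I_v})\times\widehat{k^s},\Q_\ell\bigr)$, under the composition
\[H^1\et\bigl(\tau^{-1}(V^{s_0\dots s_{d_v}}_{I_v})\times\widehat{k^s},\Q_\ell\bigr)\xrightarrow{f^*_{|A}}H^1\et(\overline A,\Q_\ell)\xrightarrow{\sim}\Q_\ell(-1).\]

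Set $W\coloneqq\tau^{-1}(V^{s_0\dots s_{d_v}}_{I_v})$ and $C_v\coloneqq f^{-1}(W)$. Since $C$ is compact and $W$ is closed in $\fX_\eta$, the subspace $C_v$ is compact. Using the hypothesis $v\notin(\tau\circ f)(\partial C)$ and Proposition \ref{prop:annuli}, after shrinking $V^{s_0\dots s_{d_v}}_{I_v}$ to a sufficiently small neighbourhood of $v$ we may arrange that $C_v$ lies in the analytic interior of $C$ and that its topological boundary $\partial C_v$ is a finite disjoint union of circles, one for each annulus $A$ above an edge $e\ni v$. The total $\sum_A\widetilde w_A^*\bigl(\beta^*(\tilde c)\bigr)$ is then the image of $f^*\tilde c\in H^1\et(C_v\times\widehat{k^s},\Q_\ell)$ under
\[H^1\et(C_v\times\widehat{k^s},\Q_\ell)\longrightarrow\bigoplus_A H^1\et(\overline A,\Q_\ell)\cong\bigoplus_A\Q_\ell(-1),\]
followed by summation over all $A$.

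To conclude, decompose $C_v=\bigsqcup_k C_v^{(k)}$ into connected components. For each $k$, Poincar\'e--Lefschetz duality for the compact quasi-smooth $k$-analytic curve $C_v^{(k)}$ with boundary yields $H^2\et\bigl(C_v^{(k)},\partial C_v^{(k)};\Q_\ell\bigr)\cong\Q_\ell(-1)$, under which the connecting map $H^1\et\bigl(\partial C_v^{(k)}\bigr)\to H^2\et\bigl(C_v^{(k)},\partial C_v^{(k)};\Q_\ell\bigr)$ is the summation $\bigoplus_{A\subset C_v^{(k)}}\Q_\ell(-1)\to\Q_\ell(-1)$. Exactness in the long exact sequence of the pair gives $\sum_{A\subset C_v^{(k)}}\widetilde w_A^*\bigl(\beta^*(\tilde c)\bigr)=0$ for each $k$, and summing over $k$ yields the desired vanishing. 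The main technical obstacle is to justify this Poincar\'e--Lefschetz identification in the Berkovich framework, together with the precise description of $\partial C_v$; both should follow from Ducros's structure theory for $k$-analytic curves and from Berkovich's \'etale cohomology and duality results for compact quasi-smooth analytic curves.
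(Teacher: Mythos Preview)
Your proposal follows essentially the same strategy as the paper: reduce to showing $\sigma_v^*\circ\beta^*=0$ via Proposition \ref{prop:weights-cycles}, then invoke exactness of \eqref{eq:long exact sequence} to conclude. The only substantive difference is in how the vanishing of the sum over the annuli is established. The paper isolates this as a separate lemma (Lemma \ref{lem:boundary}): glue discs onto the boundary points of $C^\circ$ to obtain a proper smooth curve $\widehat{C^\circ}$, then read off $\Sigma\circ\iota_b^*=0$ from the Gysin sequence for $Z\hookrightarrow\widehat{C^\circ}$. This is more elementary than the Poincar\'e--Lefschetz duality you invoke, and sidesteps the ``technical obstacle'' you flag at the end. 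One terminological slip: in the Berkovich framework $\partial C_v$ is a finite set of points, not circles; the annuli $A$ are neighbourhoods of those points, which is what the paper works with directly. The paper also makes a finite ground-field extension so that the neighbourhood $V$ can be taken of the form \eqref{eq:neighborhood V}, a step implicit in your ``after shrinking'' clause.
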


We begin with a simple observation.

\begin{lem}
For any extension $k\subset k'$ of non-archimedean fields, if we apply extension of ground fields to $\fX_\eta$ and to the map $f \colon C\rightarrow \fX_\eta$, the skeleton $S_\fX$ and the tropical curve $C\trop$ remains unchanged.
\end{lem}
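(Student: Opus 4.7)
The plan is to unwind the explicit descriptions of the skeleton (Lemma~\ref{lem:skeleton}) and of the retraction map (Lemma~\ref{lem@retraction}), observe that both are manifestly invariant under ground field extension, and then deduce the statement for the tropical curve from the surjectivity of the projection $C\times_k k'\to C$.

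Write $\fX'\coloneqq \fX\mathbin{\widehat{\otimes}_{k^\circ}}k'^\circ$ and let $\pi\colon \fX'_\eta\to\fX_\eta$ be the canonical projection. First I would check that the combinatorial data entering Lemma~\ref{lem:skeleton} is preserved by base change: the local étale charts $\fU\to\fS(n,d,a)$ pull back to analogous charts after tensoring with $k'^\circ$; the elements $a_I\in k^{\circ\circ}\subset k'^{\circ\circ}$ retain the same real-valued valuations; and the combinatorial pattern of non-empty strata is read off from these charts. Hence $S_{\fX'}\subset\R^{I_{\fX'}}=\R^\IX$ coincides with $S_\fX$ as a subset of Euclidean space. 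To match the two embeddings into the respective generic fibers I would recall that points of the skeleton are monomial seminorms with respect to the local coordinates $T_0,\dots,T_d$ and that any such seminorm admits a canonical (Gauss-type) extension to the base-changed affinoid algebra; the resulting section $S_\fX\hookrightarrow\fX'_\eta$ is split by $\pi$ and has image $S_{\fX'}$, so we may identify $S_\fX=S_{\fX'}$ inside $\fX'_\eta$. The compatibility $\tau'=\tau\circ\pi$ is then immediate from Lemma~\ref{lem@retraction}, since both retractions are given by $y\mapsto(\val T_0(y),\dots,\val T_d(y))$ and these valuations are preserved under $\pi$.

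For the tropical curve I would assemble the commutative diagram
\[\begin{tikzcd}
C\times_k k' \arrow{r}{f'} \arrow{d}{\pi_C} & \fX'_\eta \arrow{d}{\pi} \arrow{r}{\tau'} & S_{\fX'}\arrow{d}{=} \\
C \arrow{r}{f} & \fX_\eta \arrow{r}{\tau} & S_\fX
\end{tikzcd}\]
and invoke the surjectivity of $\pi_C$, which is standard for compact $k$-analytic spaces (any point of $C$ admits a lift by extending its associated seminorm to $k'$), to conclude $(\tau'\circ f')(C\times_k k')=(\tau\circ f\circ\pi_C)(C\times_k k')=(\tau\circ f)(C)=C\trop$. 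The main obstacle is the canonical identification $S_\fX=S_{\fX'}$ inside the generic fibers: this rests on Berkovich's intrinsic description of skeletal points as monomial seminorms, and once it is in place all remaining assertions reduce to direct verifications from the local formulas.
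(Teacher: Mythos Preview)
The paper states this lemma without proof, treating it as a ``simple observation''; there is nothing in the paper to compare your argument against. Your proposal is a correct and reasonable fleshing out of why the observation holds: the key ingredients are exactly the ones you identify---functoriality of the \'etale charts $\fU\to\fS(n,d,a)$ under $-\widehat{\otimes}_{k^\circ}k'^\circ$, the explicit valuation formula for the retraction in Lemma~\ref{lem@retraction} (which is manifestly compatible with the projection $\pi$), Berkovich's description of skeletal points as monomial seminorms to pin down the embedding $S_\fX\hookrightarrow\fX'_\eta$, and surjectivity of $\pi_C\colon C\times_k k'\to C$ to transport the image.

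One small caveat worth flagging: your identification $I_{\fX'}=I_\fX$ tacitly assumes that the strata $D_I$ are geometrically irreducible, not merely irreducible as the paper assumes; otherwise some $D_i$ could split after extending the residue field and the dual complex would acquire extra simplices. This is harmless in the paper's actual application (the extension is taken only to enlarge the value group, and one may choose it totally ramified), and in any event the conclusion about $C\trop$ survives: even if $S_{\fX'}$ were a refinement of $S_\fX$, your commutative square still gives $(\tau'\circ f')(C\times_k k')=(\tau\circ f)(C)$ as subsets of $S_\fX$ via the projection, which is all that is needed downstream.
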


We assume that our tropical curve $C\trop$ is already subdivided according to Proposition \ref{prop:annuli}.
We choose a sufficiently small convex neighborhood $V$ of $v$ inside $S_\fX$ which does not contain any other vertex of $C\trop$. We further require that $V$ is of the form \eqref{eq:neighborhood V}. This can be achieved by making a finite extension of our field $k$.

\begin{lem}\label{lem:boundary}
Let $k'$ be a separably closed non-archimedean field, $C^\circ$ a connected compact quasi-smooth $k'$-analytic curve, and let $b_1,\dots,b_m$ be the boundary points of $C^\circ$. 
We assume that there are neighborhoods $A_1,\dots,A_m$ of $b_1,\dots,b_m$ which are pairwise disjoint and are isomorphic to annuli. Let $A=\coprod A_i$ and let $\iota_b$ denote the inclusion $A\hookrightarrow C^\circ$. Then the composition $\Sigma\circ \iota_b^*$
\[H^1\et(C^\circ,\Q_\ell)(1)\xrightarrow{\iota_b^*} H^1\et(A,\Q_\ell)(1)\simeq \Q_\ell^m\xrightarrow{\Sigma}\Q_\ell\]
is zero, where $\Sigma$ denotes the sum.
\end{lem}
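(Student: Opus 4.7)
The plan is to ``cap off'' each boundary annulus by a closed disk to produce a proper smooth connected $k'$-analytic curve $\widehat C$, and then read the vanishing off the Mayer--Vietoris sequence for the resulting open cover.

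First I would construct $\widehat C$ as follows. Identify each $A_i$ with an open annulus $\{c_1 < |z_i| < c_2\}$ so that the boundary point $b_i$ of $C^\circ$ corresponds to the outer Shilov boundary; then glue a closed disk $D_i \simeq \{|z_i| \le c_2\}$ to $A_i$ along a sub-annulus. Since the $A_i$ are pairwise disjoint, the result $\widehat C$ is a compact quasi-smooth $k'$-analytic curve with empty analytic boundary, i.e.\ a proper smooth $k'$-analytic curve; and it is connected because $C^\circ$ is connected and each $D_i$ is contractible.

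Next I would apply Mayer--Vietoris to the cover $\widehat C = C^\circ \cup \coprod_i D_i$, whose intersection is $A$:
\[\cdots \to H^1\et(\widehat C,\Q_\ell) \to H^1\et(C^\circ,\Q_\ell) \oplus \bigoplus_i H^1\et(D_i,\Q_\ell) \to H^1\et(A,\Q_\ell) \xrightarrow{\delta} H^2\et(\widehat C,\Q_\ell) \to \cdots .\]
Each closed disk $D_i$ has vanishing higher étale cohomology with $\Q_\ell$-coefficients, so the middle direct sum reduces to $H^1\et(C^\circ,\Q_\ell)$. After twisting by $(1)$, one has $H^1\et(A,\Q_\ell)(1) \simeq \Q_\ell^m$, one copy per annulus, and $H^2\et(\widehat C,\Q_\ell)(1) \simeq \Q_\ell$ via the trace isomorphism for the proper smooth connected curve $\widehat C$. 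Provided that the connecting map $\delta(1)\colon \Q_\ell^m \to \Q_\ell$ equals the sum map $\Sigma$, exactness of the Mayer--Vietoris sequence immediately yields the desired vanishing of $\Sigma \circ \iota_b^*$ on $H^1\et(C^\circ,\Q_\ell)(1)$.

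The step I expect to be the main obstacle is the identification of $\delta(1)$ with $\Sigma$. Restricted to the $i$th summand, $\delta(1)$ is the Gysin-type boundary map $H^1\et(A_i,\Q_\ell)(1) \to H^2\et(\widehat C,\Q_\ell)(1)$ arising from the inclusion of $A_i$ into the disk $D_i \subset \widehat C$: the canonical ``winding number'' generator of $H^1\et(A_i,\Q_\ell)(1)$ must be shown to map to the class of a closed point of $\widehat C$ lying inside $D_i$, which pairs to $1$ under the trace. This is the standard compatibility between the Mayer--Vietoris boundary and Poincaré duality, and can be verified locally on each $D_i$, either by direct calculation using the explicit model \eqref{eq@special_formal_scheme} or by invoking purity for the center of $D_i$ in $\widehat C$.
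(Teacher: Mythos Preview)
Your approach is essentially the one in the paper: glue disks onto the boundary annuli to produce a proper smooth curve $\widehat C$, and then read off the vanishing from a long exact sequence. The only difference is that the paper uses the Gysin sequence for the centers $Z=\{z_1,\dots,z_m\}\hookrightarrow\widehat C$ rather than Mayer--Vietoris; there the relevant map $H^0\et(Z,\Q_\ell)\to H^2\et(\widehat C,\Q_\ell)(1)$ is by construction the cycle class map, so the identification with $\Sigma$ is immediate and the step you flag as the ``main obstacle'' disappears.
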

\begin{proof}
By gluing discs onto $b_1,\dots,b_m$, we embed $C^\circ$ into a proper smooth $k'$-analytic curve, which we denote by $\widehat{C^\circ}$.
Let $z_1,\dots,z_m$ be the centers of the discs, and let $Z=\coprod_{i=1}^m z_i$.
Now the lemma follows from the Gysin exact sequence
\begin{multline*}
0\rightarrow H^1\et(\widehat{C^\circ}, \Q_\ell)(1)\rightarrow H^1\et(C^\circ,\Q_\ell)(1)\rightarrow H^0\et (Z,\Q_\ell)\\ \rightarrow H^2\et(\widehat{C^\circ},\Q_\ell)(1)\simeq \Q_\ell \rightarrow \cdots .
\end{multline*}
\end{proof}

Let $r$ be a positive real number.
Let \[V^r=\Set{x\in V\subset S_\fX\subset \R^{\IX} | \dist (x, V^c)>r},\]
where $\dist$ denotes the standard Euclidean metric in $\R^{\IX}$ and $V^c$ denotes the complement of $V$ in $S_\fX$. Put $V^b=V\setminus V^r$, $C^\circ = (\tau\circ f)^{-1}(V)$, $C^b=(\tau\circ f)^{-1}(V^b)$, $\iota_b\colon C^b\hookrightarrow C^\circ$. We choose $r$ sufficiently small, such that $C^b$ does not contain any vertices of $C\trop$. For each segment $e$ in $C\trop\cap V^b$, we choose the orientation of $e$ to be the one that points away from $v$.
The definition of tropical weights in Section \ref{sec:weights} gives an element $\widetilde w_e\in \Ker\big(Z^{J}\xrightarrow{\Sigma}Z\big)$, which induces a map \[\widetilde w_e^* \colon \Coker\big(\Q_\ell\xrightarrow{\Delta}\Q_\ell^J\big)\longrightarrow \Q_\ell.\]
Then $\sigma_v$ is the sum of $\widetilde w_e$ over all segments $e$ in $C\trop\cap V^b$.
It induces a map \[\sigma_v^* \colon \Coker\big(\Q_\ell\xrightarrow{\Delta}\Q_\ell^J\big)\longrightarrow \Q_\ell.\] By Proposition \ref{prop:weights-cycles}, we have a commutative diagram
\begin{equation}\label{eq:commutative diagram for C}
\begin{tikzcd}[column sep=small]
R^1\Gamma\left(j^*R\Psi\QXe\right) \arrow{r}{\beta^*} \arrow{d}{\sim} &R^1\Gamma\left(j^*R\Phi\QXe\right) \arrow{r}{\sim} & \Coker\big(\Q_\ell\xrightarrow{\Delta}\Q_\ell^J\big)(-1)\arrow{d}{\sigma_v^*} \\
H^1\et(\tau^{-1}(V)\times\widehat{k^s},\Q_\ell) \arrow{r}{f_{|C^b}^*} & H^1\et(\overline{C^b},\Q_\ell) \arrow{r}{\Sigma} &\Q_\ell(-1),
\end{tikzcd}
\end{equation}
where the bottom row factorizes as
\[H^1\et(\tau^{-1}(V)\times\widehat{k^s},\Q_\ell) \xrightarrow{f_{|C^\circ}^*} H^1\et(\overline{C^\circ},\Q_\ell)\xrightarrow{\iota_b^*} H^1\et(\overline{C^b},\Q_\ell) \xrightarrow{\Sigma} \Q_\ell(-1).\]
By Lemma \ref{lem:boundary}, we have $\Sigma\circ \iota_b^*=0$. By the commutativity of \eqref{eq:commutative diagram for C}, we have 
\begin{equation}\label{eq:sigmabeta=0}
\sigma_v^*\circ\beta^*=0 .
\end{equation}
By \eqref{eq:long exact sequence}, we have a long exact sequence
\begin{equation}\label{eq:long exact sequence Q_l coefficients}
\dots\rightarrow R^1\Gamma(j^* R\Psi\QXe)\xrightarrow{\beta^*} R^1\Gamma\left(j^*R\Phi\QXe\right)\xrightarrow{\alpha^*} R^2\Gamma(j^*\Q_{\ell,\fXbs})\rightarrow\cdots .
\end{equation}
Combining \eqref{eq:sigmabeta=0} and the exactness of \eqref{eq:long exact sequence Q_l coefficients}, we have proved that the sum of weights $\sigma_v$ lies in the image of the map $\alpha_\ell$.

\section{From cohomological classes to algebraic cycles}\label{sec:From cohomological classes to algebraic cycles}

The passage from Theorem \ref{thm:cohomological balancing} to Theorem \ref{thm:balancing condition} is a simple application of the standard conjectures on algebraic cycles, which is easy to prove in codimension one.
More precisely, for divisors with rational coefficients in a projective variety, Matsusaka \cite{Matsusaka_Criteria_1957} proved that numerical equivalence implies algebraic equivalence. So in particular, numerical equivalence implies homological equivalence.

Let us suppose that $\Image (\alpha_\Q)\subset \Q^{I_\fX}$ is contained in a hyperplane given by $f=0$.
Let $x_i,\  i\in I_\fX$ be the coordinates on $\Q^{I_\fX}$ and write $f$ as $\sum_{i\in {I_\fX}} a_i x_i$, for some $a_i\in\Q$.
The fact that $\Image(\alpha_\Q)$ is contained in the hyperplane $f=0$ implies that the $\Q$-divisor $\sum_{i\in {I_\fX}} a_i L_i$ is numerically equivalent to 0, it is thus homologically equivalent to 0.
Therefore, the image $\Image(\alpha_\ell)$ in Theorem \ref{thm:cohomological balancing} is also contained in the hyperplane in $\Q_\ell^{I_\fX}$ defined by the same equation $f=0$. To conclude, we have shown that $\Image \alpha_\Q\otimes \Q_\ell\simeq \Image \alpha_\ell$. So we have deduced Theorem \ref{thm:balancing condition} from Theorem \ref{thm:cohomological balancing}.


\bibliographystyle{plain}
\bibliography{../dahema}

\end{document}